\newtheorem{theorem}{Theorem}
\newtheorem{corollary}[theorem]{Corollary}
\newtheorem{lemma}[theorem]{Lemma}
\newenvironment{proof}[1][Proof]{\noindent\textbf{#1.} }{\ \rule{0.5em}{0.5em}}
\begin{document}
\doublespacing
%%%%%%%%%%%%%%%%%%%%%%%%%%%%%%%%%%%%%%%%%%%%%%%%%%%%%%%%%%%%%%%%%%%%%%%%%%%%%%%%%%%%%%%%%%%%%%%%%%%%%%%%%%%%%%%%%%%%%%%%%%%%

\title{\textbf{A Bayesian Nonparametric Test for Assessing Multivariate Normality}}   % type title between braces

%\author{Luai Al-Labadi\thanks{{\em Address for correspondence:} Luai Al-Labadi, Department  of Mathematics, University of Sharjah, P. O. Box 27272, Sharjah, UAE. E-mail:  lallabadi@sharjah.ac.ae.}}

%\author{Luai Al-Labadi\thanks{{\em Address for correspondence:} Luai Al-Labadi,  Live Oak Hall (LO) 1300,
%CSU Northridge, 18111 Nordhoff Street, Northridge, CA, USA. E-mail: luai.allabadi@gmail.com.}}

\author[1]{Luai Al-Labadi\thanks{{\em Corresponding author:} luai.allabadi@utoronto.ca}}

\author[2]{Forough Fazeli Asl\thanks{forough.fazeli@math.iut.ac.ir}}

\author[2]{Zahra Saberi\thanks{ z\_saberi@cc.iut.ac.ir}}

\affil[1]{Department of Mathematical and Computational Sciences, University of Toronto Mississauga, Mississauga, Ontario L5L 1C6, Canada.}
\affil[2]{Department of Mathematical Sciences, Isfahan University of Technology, Isfahan 84156-83111, Iran.}

\date{}
\maketitle

\pagestyle {myheadings} \markboth {} {A BNP Test for Multivariate Normality}

\begin{abstract}
A novel Bayesian nonparametric test for assessing multivariate normal models is presented. While there are extensive frequentist and graphical methods for testing multivariate normality, it is challenging to find Bayesian counterparts. The approach is based on the Dirichlet process and Mahalanobis distance. Specifically, the Mahalanobis distance is employed to transform the $m$-variate problem into a univariate problem. Then the Dirichlet process is used as a prior on the distribution of the distance. The concentration of the distribution of the distance between the posterior process and the chi-square distribution with $m$ degrees of freedom is compared to that between the prior process and the chi-square distribution via a relative belief ratio. The distance between the Dirichlet process and the chi-square distribution is established based on the Anderson-Darling distance. Key results of the approach are derived. The procedure is illustrated through several examples, in which it shows excellent performance. 
\par

 \vspace{9pt} \noindent\textsc{Keywords:}  Anderson-Darling distance, Dirichlet process, Mahalanobis distance, Multivariate hypothesis testing, Relative belief inferences.

 \vspace{9pt}

\noindent { \textbf{MSC 2010}} 62F15, 62G10, 62H15

\end{abstract}
	
\section{Introduction}
The assumption of multivariate normality is a key assumption in many statistical applications such as pattern recognition and exploratory multivariate methods \cite{Dubes,Fernandez}. The need to check this assumption is of special importance as if it does not hold, the obtained results based on this assumption may lead to an error.
Specifically, for a given $m$-variate sampled data $\mathbf{y}_{m\times n}=\left(
\mathbf{y}_{1},\ldots,\mathbf{y}_{n}\right)$
with size of $n$, where $\mathbf{y}_{i}\in\mathbb{R}^{m}$ for $i=1,\ldots,n$, the interest is to  determine whether $\mathbf{y}_{m\times n}$ comes from a multivariate normal population.

Many tests and graphical methods have been considered to assess the multivariate normality assumption. Healy \cite{Healy} described an extension of normal plotting techniques to handle multivariate data. Mardia \cite{Mardia}
proposed a test based on the asymptotic distribution of measures of multivariate skewness and kurtosis. Tests based on transforming the multivariate problem into the univariate problem were established by Rinc\'{o}n-Gallardo et al.\cite{Rincon}, Royston \cite{Royston}, Fattorini \cite{Fattorini}, and Hasofer and Stein\cite{Hasofer}. A class of
invariant consistent tests based on a weighted integral of the squared distance between the empirical characteristic function and the characteristic function of the multivariate normal distribution was suggested by  Henze and Zirkler \cite{Henze1990}.  Holgersson \cite{Holgersson} presented a simple graphical method based on the scatter plot. Doornik and Hansen \cite{Doornik} developed an omnibus test based on a transformed skewness and kurtosis. They showed that their test is more powerful than the Shapiro-Wilk test proposed by  Royston \cite{Royston}.  Alva and Estrada \cite{Villasenor} proposed a multivariate normality test based on Shapiro-Wilk’s statistic for univariate normality and on an empirical standardization
of the observations. J\"{o}nsson \cite{Jonsson} presented a robust test with powerful performance.  Hanusz and Tarasi\'{n}ska \cite{Hanusz} proposed two  tests based on Small's and Srivastava's graphical methods.
A powerful test was offered by Zhou and Shao \cite{Zhou} with good application in biomedical studies.  Kim \cite{Kim} generalized Jarque-Bera univariate normality test to the multivariate case. Kim and Park \cite{Kim2018} derived several powerful omnibus tests based on the likelihood ratio and the characterizations of the multivariate normal distribution. Madukaife and Okafor \cite{Madukaife} proposed a powerful affine invariant test  based on interpoint distances of principal components. Henze and Visagie \cite{Henze2019} derived a new test based on a partial differential equation involving the moment generating function.

While there are considerable frequentist  and  graphical methods for testing multivariate normality, it is difficult  to find relevant Bayesian counterparts. Most available Bayesian tests focused on employing Bayesian nonparametric methods for univariate data. See for example  Al-Labadi and Zarepour \cite{Al-Labadi2013,Al-Labadi2014a,Al-Labadi2017a}. A remarkable work that covers the multivariate case was developed by Tokdar and Martin \cite{Tokdar}, where they established a Bayesian test based on characterizing alternative models by Dirichlet process mixture distributions. The authors took the Bayes factor as evidence for or against the null distribution, using an importance sampling for inference. In the current paper, a novel Bayesian nonparametric test for assessing multivariate normality is proposed. The developed test is based on using Mahalanobis distance. Using this distance has the advantage of converting the $m$-variate problem into a univariate problem. Specifically, whenever $\mathbf{y}_{m\times n}$ comes from a multivariate normal distribution, the distribution of the corresponding Mahalanobis distances, denoted by $P$, is approximately chi-square with $m$ degrees of freedom \cite{Johnson}. This reduces the problem to test the hypothesis $\mathcal{H}_{0}:P=F_{(m)}$, where $F_{(m)}$ denotes the cumulative distribution function of the chi-square distribution with $m$ degrees of freedom. The proposed test deems the Dirichlet process as a prior on $P$. Then the concentration of the distribution of the distance between the posterior process and $F_{(m)}$ is compared to the concentration of the distribution of the distance between the prior process and $F_{(m)}$. The distance between the Dirichlet process and $F_{(m)}$ is developed based on the Anderson-Darling distance as an appropriate tool to detect the difference, especially when this difference is due to the tails. This comparison is made via a \textit{relative belief ratio}. A calibration of the relative belief ratio is also presented.  The anticipated test is easy to implement with excellent performance, it does not require providing a closed form of the relative belief ratio. In addition, unlike the tests that use p-values, the proposed test permits us to state evidence for the null hypothesis. We point out that the comparison between the concentration of the posterior and the prior distribution of the distance was suggested by Al-Labadi and Evans \cite{Al-Labadi2018} for model checking of univariate distributions.

The remainder of this paper is organized as follows.  A general discussion about the relative belief ratio is given in Section 2. The definition and some fundamental properties of the Dirichlet process are presented in Section 3. An explicit expression for calculating the Anderson-Darling distance between the Dirichlet process and a continuous distribution is derived in Section 4. A Bayesian nonparametric test for assessing multivariate normality and some of its relevant properties are developed in Section 5. A computational algorithm to carry out the proposed test is discussed in Section 6. The performance of the proposed test is discussed in Section 7, where several simulated examples and a real data set are considered.  Finally, a summary of the findings is given in Section 8. All technical proofs are included in the Appendix.
\section{Relative belief inferences}
The relative belief ratio is a common measure of statistical evidence. It leads to a straightforward inference in hypothesis testing problems. For more details, let $\{f_{\theta}:\theta\in\Theta\}$ denote a collection of densities on a sample space $\mathfrak{X}$ and let $\pi$ denote a prior on the parameter space $\Theta$. Note that
the densities may represent discrete or continuous probability measures but they are
all with respect to the same support measure  $d\theta$. After
observing the data $x,$ the posterior distribution of $\theta$, denoted by $\pi(\theta\,|\,x)$, is a revised prior and is given by the
density $\pi(\theta\,|\,x)=\pi(\theta)f_{\theta}(x)/m(x)$, where $m(x)=\int
_{\Theta}\pi(\theta)f_{\theta}(x)\,d\theta$ is the prior predictive density of
$x.$  For a parameter of interest $\psi=\Psi(\theta),$ let $\Pi_{\Psi}$ denote
the marginal prior probability measure and $\Pi_{\Psi}(\cdot|\,x)$ denote
the marginal posterior probability measure. It is assumed that
$\Psi$ satisfies regularity conditions
so that the prior density $\pi_{\Psi}$ and the posterior density
$\pi_{\Psi}(\cdot\,|\,x)$ of $\psi$ exist with respect to some support measure on the range space for $\Psi$
\cite{Evans2015}. The relative belief ratio for a value
$\psi$ is then defined by $$RB_{\Psi}(\psi\,|\,x)=\lim_{\delta\rightarrow0}%
\Pi_{\Psi}(N_{\delta}(\psi\,)|\,x)/\Pi_{\Psi}(N_{\delta}(\psi\,)),$$ where
$N_{\delta}(\psi\,)$ is a sequence of neighborhoods of $\psi$ converging
nicely to $\psi$ as $\delta\rightarrow0$ \cite{Evans2015}. When $\pi_{\Psi}$ and  $\pi_{\Psi}(\cdot\,|\,x)$ are continuous at $\psi,$ the relative belief ratio is defined by
\begin{equation*}
RB_{\Psi}(\psi\,|\,x)=\pi_{\Psi}(\psi\,|\,x)/\pi_{\Psi}(\psi), \label{relbel}%
\end{equation*}
the ratio of the posterior density to the prior density at $\psi.$  Therefore,
$RB_{\Psi}(\psi\,|\,x)$ measures the change in the belief of $\psi$ being the true value from a \textit{priori} to a \textit{posteriori}. Note that a relative belief ratio is similar to a Bayes factor, as both are measures of evidence, but the latter measures evidence via the change in an odds ratio. In general, when a Bayes factor is defined via a limit in the
continuous case, the limiting value equals the corresponding relative belief ratio. For a further discussion about the relationship between relative belief ratios and Bayes factors
see, for instance,  Evans \cite{Evans2015}, Chapter 4.

Since $RB_{\Psi}(\psi\,|\,x)$ is a measure of the evidence that $\psi$ is the true value, if $RB_{\Psi}(\psi\,|\,x)$ $>1$, then the probability of the $\psi$ being the true value from a priori to a posteriori is increased, consequently there is evidence based on the data that $\psi$ is the true value. If $RB_{\Psi}(\psi\,|\,x)<1$, then the probability of the $\psi$ being the true value from a priori to a posteriori is decreased. Accordingly, there is evidence against based on the data that $\psi$ being the true value. For the case $RB_{\Psi}(\psi\,|\,x)=1$ there is no
evidence either way.

Obviously, $RB_{\Psi}(\psi_{0}\,|\,x)$ measures the evidence of the hypothesis $\mathcal{H}_{0}:\Psi(\theta)=\psi_{0}$. For a large  value of $c$ $(c\gg 1)$, $RB_{\Psi}(\psi_{0}\,|\,x)=c$ provides
 strong evidence in favor of $\psi_{0}$ because belief has increased by a
factor of $c$ after seeing the data. However, there may also exist other
values of $\psi$ that had even larger increases. Thus, it is also necessary, however, to calibrate whether this is strong or weak evidence for
or against $\mathcal{H}_{0}.$ A typical
calibration of $RB_{\Psi}(\psi_{0}\,|\,x)$  is given by the  \textit{strength}
\begin{equation}
\Pi_{\Psi}\left[RB_{\Psi}(\psi\,|\,x)\leq RB_{\Psi}(\psi_{0}\,|\,x)\,|\,x\right].
\label{strength}%
\end{equation}
The value in \eqref{strength} indicates that the posterior probability that the true value of $\psi$ has a relative
belief ratio no greater than that of the hypothesized value $\psi_{0}.$ Noticeably, (\ref{strength}) is not a p-value as it has a very different
interpretation. When $RB_{\Psi}(\psi_{0}\,|\,x)<1$, there is evidence
against $\psi_{0},$ then a small value of (\ref{strength}) indicates
 strong evidence against $\psi_{0}$. On the other hand, a large value for \eqref{strength}    indicates   weak evidence against $\psi_{0}$.
Similarly, when $RB_{\Psi}(\psi_{0}\,|\,x)>1$, there is  evidence in favor
of $\psi_{0},$ then a small value of (\ref{strength}) indicates  weak
evidence in favor of $\psi_{0}$, while a large value of \eqref{strength} indicates
 strong evidence in favor of $\psi_{0}$. For applications of the use of the relative belief ratio in different univariate hypothesis testing problems, see  Evans \cite{Evans1997,Evans2015}, Al-Labadi and Evans \cite{Al-Labadi2018} and  Al-Labadi et al. \cite{Al-Labadi-Baskurt2017,Al-Labadi-Baskurt2018}.
 \section{Dirichlet process}

The Dirichlet process, introduced by Ferguson \cite{Ferguson73}, is the
most commonly used prior in the Bayesian nonparametric inferences.  In this section,  we recall the most relevant properties of this prior. Consider a space
$\mathfrak{X}$ with a $\sigma$-algebra $\mathcal{A}$ of subsets of
$\mathfrak{X}$, let $H$ be a fixed probability measure on $(\mathfrak{X}%
,\mathcal{A}),$ called the \emph{base measure}, and $a$ be a positive number,
called the \emph{concentration parameter}. A random probability measure
$P=\left\{  P(A):A\in\mathcal{A}\right\}  $ is called a Dirichlet process on
$(\mathfrak{X},\mathcal{A})$ with parameters $a$ and $H,$ denoted by $P\sim
{DP}(a,H),$ if for every measurable partition $A_{1},\ldots,A_{k}$ of
$\mathfrak{X} $ with $k\geq2\mathfrak{,}$ the joint distribution of the vector
$\left(  P(A_{1}),\ldots\,P(A_{k})\right)$ has the Dirichlet distribution with parameter
$aH(A_{1}),\ldots,$ $aH(A_{k})$. Also, it is assumed that
$H(A_{j})=0$ implies $P(A_{j})=0$ with probability one. Consequently, for any
$A\in\mathcal{A}$, $P(A)\sim$ beta$(aH(A),a(1-H(A)))$,
${E}(P(A))=H(A)\ $and ${Var}(P(A))=H(A)(1-H(A))/(1+a).$ Accordingly, the base measure $H$
plays the role of the center of $P$ while the concentration parameter $a$ controls variation of
the process $P$ around the base measure $H$. One of the most well-known
properties of the Dirichlet process is the conjugacy property. That is, when the sample $x=(x_{1},\ldots,x_{n})$
is drawn from $P\sim DP(a,H)$, the posterior distribution of $P$ given $x$,
denoted by $P_{x}$, is also a Dirichlet process with
concentration parameter $a+n$ and base measure
\begin{equation}\label{pos base measure}
H_{x}=a(a+n)^{-1}H+n(a+n)^{-1}F_{n},
\end{equation}
where $F_{n}$ denotes the empirical cumulative distribution function (cdf) of the sample
$x$. Note that, $H_{x}$ is a convex combination
of the base measure $H$ and the empirical cdf $F_{n}$. Therefore, $H_{x}\rightarrow H$ as
$a\rightarrow\infty$ while $H_{x}\rightarrow F_{n}$ as $a\rightarrow0.$ A detailed discussion
about choosing the hyperparameters $a$ and $H$ will be presented in Section 5.

Following Ferguson \cite{Ferguson73}, $P\sim{DP}(a,H)\ $can be represented as
\begin{equation}
P=\sum_{i=1}^{\infty}L^{-1}(\Gamma_{i}){\delta_{Y_{i}}/}\sum_{i=1}^{\infty
}{{L^{-1}(\Gamma_{i})}}, \label{series-dp}%
\end{equation}
where $\Gamma_{i}=E_{1}+\cdots+E_{i}$ with $E_{i}\overset{i.i.d.}{\sim}%
$\ exponential$(1),Y_{i}\overset{i.i.d.}{\sim}H$ independent of the
$\Gamma_{i},L^{-1}(y)=\inf\{x>0:L(x)\geq y\}$ with $L(x)=a\int_{x}^{\infty
}t^{-1}e^{-t}dt,x>0,$ and ${\delta_{a}}$ the Dirac delta measure. The series representation
(\ref{series-dp}) implies that the Dirichlet process is a discrete probability
measure even for the cases with an absolutely continuous base measure $H$. Note that, since data is always measured
to finite accuracy, the true distribution being sampled from is discrete. This makes
the discreteness property of $P$ has no significant limitation. By imposing the weak topology, the support of the Dirichlet process could
be quite large. To be more precise, when the support of the base measure
is $\mathfrak{X}$, then the space of all probability measures on $\mathfrak{X}$  is the support of
the Dirichlet process.

Because no closed form exists for $L^{-1}(\cdot)$, working with representation \eqref{series-dp} to sample from the Dirichlet process is not simple. This issue motivates researchers to propose several methods in order to
approximately simulate the Dirichlet process. One such efficient method was presented by Zarepour and Al-Labadi \cite{Zarepour2012}. They showed that the Dirichlet process $P\sim DP(a,H)$ can be approximated by
\begin{equation}\label{approx of DP}
P_{N}=\sum_{i=1}^{N}J_{i}\delta_{Y_{i}},
\end{equation}
with the
monotonically decreasing weights
$J_{i}=\frac{G_{a/N}^{-1}(\frac{\Gamma_{i}}{\Gamma_{N+1}})}{\sum_{j=1}^{N}G_{a/N}^{-1}(\frac{\Gamma_{i}}{\Gamma_{N+1}})},$
where $\Gamma_{i}$ and $Y_{i}$ are defined as before, $N$ is a positive large
integer and $G_{a/N}$ denotes the complement-cdf of the $\text{gamma}(a/N,1)$ distribution.
Note that, $G^{-1}_{a/N}(p)$ is the $(1-p)$-th quantile of the $\text{gamma}(a/N,1)$ distribution.
Also, Zarepour and Al-Labadi \cite{Zarepour2012} showed that $P_{N}$ converges almost surely to
(\ref{series-dp}) as $N$ goes to infinity with a fast rate of convergence. The following
algorithm describes how the approximation \eqref{approx of DP} can be used to generate
a sample from $DP(a,H)$.

\noindent \textbf{Algorithm A} (\emph{Approximately generating a sample from }%
$DP(a,H))$:

\begin{enumerate}
\item[$i.$] Fix a large positive integer $N$ and generate i.i.d. $Y_{i}\sim H$ for $i=1,\ldots,N$.

\item[$ii.$] For $i=1,\ldots,N+1$, generate i.i.d. $E_{i}$ from the exponential distribution with rate 1, independent
of $\left(Y_{i}\right)_{1\leq i\leq N}$ and put $\Gamma_{i}=E_{1}+\cdots+E_{i}$.
\item[$iii.$] Compute $G_{a/N}^{-1}\left(  {\Gamma_{i}}/{\Gamma_{N+1}}\right)  $ for
$i=1,\ldots,N$ and return $P_{N}.$
\end{enumerate}

\noindent For other  simulation methods of the Dirichlet process, see Bondesson
\cite{Bondesson}, Sethuraman \cite{Sethuraman}, Wolpert and Ickstadt \cite{Wolpert} and Al-Labadi and Zarepour  \cite{Al-Labadi2014b}.
\section{Measuring the distance}

Measuring the distance between two distributions is an essential  tool in model checking. In this section, two well-known distances, namely \textit{Anderson-Darling} distance and \textit{Mahalanobis} distance, are considered.

\subsection{Anderson-Darling distance}
The Anderson-Darling distance between two cdf's $F$ and $G$ is given by
\begin{equation*}
d_{AD}(F,G)=\int_{-\infty}^{\infty}\dfrac{\left(F(t)-G(t)\right)^{2}}{G(t)\left(1-G(t)\right)}\, dG(t).
\end{equation*}
Anderson-Darling distance can be viewed as a modification of the Cram\'{e}r-von
Mises distance that gives more weight to data points in the tails of the distribution.
The next lemma provides an explicit formula to compute the Anderson-Darling distance
between a Dirichlet process and a continuous cdf. Throughout this paper, $``\log"$ denotes the natural logarithm.

\begin{lemma}\label{discrete AD}
Let $G$ be a continuous cdf and $P_{N}=\sum_{i=1}^{N}J_{i}\delta_{Y_{i}}$ be a
discrete distribution, where $(Y_{i})_{1\leq i\leq N}$ are $i.i.d.$ from $H$. Let $Y_{(1)}\leq\cdots \leq Y_{(N)}$ be the order statistics of
$(Y_{i})_{1\leq i\leq N}$ and $J_{1}^{\prime},\ldots ,J_{N}^{\prime}$ are the
associated jump sizes such that $J_{i}=J_{j}^{\prime}$ when $Y_{i}=Y_{(j)}$.
Then the Anderson-Darling distance between $P_{N}$ and $G$ is given by
\begin{small}

\begin{align*}\label{app of AD-1}
d_{AD}(P_{N},G)&=2\sum_{i=1}^{N-1}\sum_{j=1}^{i-1}
\sum_{k=j+1}^{i}J_{j}^{\prime}J_{k}^{\prime}C_{i,i+1}+
\sum_{i=1}^{N-1}\sum_{j=1}^{i}J_{j}^{\prime ^{2}}C_{i,i+1}+2\sum_{i=1}^{N-1}
\sum_{j=1}^{i}J_{j}^{\prime}C_{i,i+1}^{\ast}\nonumber\\
&-\sum_{i=1}^{N-1}C_{i,i+1}^{\ast}-\log [G(Y_{(N)})(1-G(Y_{(1)}))]-1,
\end{align*}

\end{small}
where $C_{i,i+1}=\log \frac{G(Y_{(i+1)})\left(1-G(Y_{(i)})\right)}{G(Y_{(i)})\left(1-G(Y_{(i+1)}
)\right)}$ and $C_{i,i+1}^{\ast}=\log\frac{1-G(Y_{(i+1)})}{1-G(Y_{(i)})}$.
\end{lemma}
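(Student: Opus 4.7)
The plan is to evaluate $d_{AD}(P_N, G)$ directly, exploiting that the cdf of $P_N$ is the step function equal to $S_i := \sum_{j=1}^{i} J_j'$ on each interval $[Y_{(i)}, Y_{(i+1)})$, with $S_0 = 0$ and $S_N = 1$. Setting $u_i := G(Y_{(i)})$ and splitting the range of integration at the order statistics, the integral decomposes into a left tail on $(-\infty, Y_{(1)})$ (where $P_N - G = -G$), a right tail on $[Y_{(N)}, \infty)$ (where it equals $1 - G$), and $N-1$ middle pieces on $[Y_{(i)}, Y_{(i+1)})$ where it equals $S_i - G$. The substitution $u = G(t)$ turns each piece into a rational integral in $u$ that can be handled by partial fractions.

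For the two tails, a short computation with $\int u/(1-u)\,du$ and $\int (1-u)/u\,du$ yields $-u_1 - \log(1-u_1)$ and $-\log u_N - (1 - u_N)$ respectively; these are where the $-\log[G(Y_{(N)})(1-G(Y_{(1)}))]$ contribution originates. For each middle piece, the key algebraic identity
$$\frac{(S_i - u)^2}{u(1-u)} = \frac{S_i^2}{u} + \frac{(1-S_i)^2}{1-u} - 1$$
integrates on $[u_i, u_{i+1}]$ to $S_i^2 \log(u_{i+1}/u_i) + (1-S_i)^2 \log\frac{1-u_i}{1-u_{i+1}} - (u_{i+1} - u_i)$. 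After summing over $i$, the linear parts $-(u_{i+1} - u_i)$ telescope to $-(u_N - u_1)$ and combine with the tail constants $-u_1 + (u_N - 1)$ to leave exactly the trailing additive $-1$ in the claimed formula.

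The remaining step is to recast the logarithmic factors in the notation of the lemma. From the definitions, $\log(u_{i+1}/u_i) = C_{i,i+1} + C_{i,i+1}^*$ and $\log((1-u_i)/(1-u_{i+1})) = -C_{i,i+1}^*$, so expanding $(1-S_i)^2 = 1 - 2S_i + S_i^2$ collapses the middle contribution to
$$\sum_{i=1}^{N-1} \bigl[S_i^2\, C_{i,i+1} + (2S_i - 1)\, C_{i,i+1}^*\bigr].$$
Expanding $S_i^2 = \sum_{j=1}^i (J_j')^2 + 2\sum_{1 \le j < k \le i} J_j' J_k'$ and $2S_i - 1 = 2 \sum_{j=1}^i J_j' - 1$ then produces exactly the three nested sums that head the stated formula, while the stray $-1$ in $2S_i - 1$ contributes the $-\sum_{i=1}^{N-1} C_{i,i+1}^*$ term.

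The argument is entirely computational; the only real obstacle is bookkeeping: one has to keep careful track of how the polynomial residues $-(u_{i+1} - u_i)$ from the middle integrals, the tail constants $-u_1$ and $u_N - 1$, and the $\pm C_{i,i+1}^*$ contributions from the two logarithmic rewrites all assemble cleanly to leave behind only the constants $-1$ and $-\sum_{i=1}^{N-1} C_{i,i+1}^*$ in the final expression.
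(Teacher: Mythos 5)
Your proof is correct and follows essentially the same route as the paper: split the integral at the order statistics, substitute $u=G(t)$, apply the partial-fraction identity $\frac{(S-u)^2}{u(1-u)}=\frac{S^2}{u}+\frac{(1-S)^2}{1-u}-1$ (which the paper uses implicitly via the antiderivative $S^2\log y-(S-1)^2\log(1-y)-y$), and then expand $S_i=\sum_{j\le i}J_j'$. The bookkeeping you describe — the telescoping linear residues combining with the tail constants to leave $-1$, and the $(2S_i-1)C^*_{i,i+1}$ term producing the $-\sum C^*_{i,i+1}$ piece — matches the paper's assembly exactly.
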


\begin{proof}
The proof is given in Appendix A.
\end{proof}

The next corollary indicates that the distribution of $d_{AD}(P_{N},G)$ is independent of $G$ when $G=H$ and $P_{N}=\sum_{i=1}^{N}J_{i}\delta_{Y_{i}}$.

\begin{corollary}\label{corollary}
Suppose that $(Y_{i})_{1\leq i\leq N}$ are $i.i.d.$ from $G$, independent of
$(J_{i})_{1\leq i\leq N}$ and $P_{N}=\sum_{i=1}^{N}J_{i}\delta_{Y_{i}}$. Then
\begin{small}
$d_{AD}(P_{N},G)\overset{d}{=}
2\sum_{i=1}^{N-1}\sum_{j=1}^{i}J_{j}^{\prime}U_{i,i+1}^{\ast}
-\sum_{i=1}^{N-1}U_{i,i+1}^{\ast}\\
+\sum_{i=1}^{N-1}
\sum_{j=1}^{i}J_{j}^{\prime ^{2}}U_{i,i+1}+2\sum_{i=1}^{N-1}\sum_{j=1}^{i-1}
\sum_{k=j+1}^{i}J_{j}^{\prime}J_{k}^{\prime}U_{i,i+1}
-\log [U_{(N)}(1-U_{(1)})]-1$\end{small}, where
$U_{i,i+1}=\log \frac{U_{(i+1)}\left(1-U_{(i)}\right)}{U_{(i)}\left(1-U_{(i+1)}\right)}$,
$U_{i,i+1}^{\ast}=\log \frac{1-U_{(i+1)}}{1-U_{(i)}}$ and $U_{(i)}$ is the
i-th order statistic for $(U_{i})_{1\leq i\leq N}$ i.i.d. uniform$[0, 1]$.
\end{corollary}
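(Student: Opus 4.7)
The plan is to derive the corollary as a direct consequence of Lemma \ref{discrete AD} combined with the probability integral transform. The starting point is the explicit formula for $d_{AD}(P_N,G)$ established in the lemma, which expresses the distance purely in terms of the jump sizes $J_j'$ and the evaluations $G(Y_{(i)})$ of $G$ at the order statistics of the atoms.

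First I would invoke the fact that, since $G$ is continuous, $U_i:=G(Y_i)\sim \text{uniform}[0,1]$ i.i.d. when $Y_i\overset{i.i.d.}{\sim}G$. Moreover, because $G$ is non-decreasing, it preserves the ordering of the sample: $G(Y_{(i)})=U_{(i)}$, where $U_{(i)}$ is the $i$-th order statistic of $(U_1,\ldots,U_N)$. Substituting $G(Y_{(i)})=U_{(i)}$ into the coefficients $C_{i,i+1}$ and $C_{i,i+1}^\ast$ from Lemma \ref{discrete AD} turns them exactly into $U_{i,i+1}$ and $U_{i,i+1}^\ast$, respectively, and the boundary term $\log[G(Y_{(N)})(1-G(Y_{(1)}))]$ becomes $\log[U_{(N)}(1-U_{(1)})]$.

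Next I would handle the jump sizes. The permutation $\sigma$ that sorts $(Y_1,\ldots,Y_N)$ into $(Y_{(1)},\ldots,Y_{(N)})$ is identical (almost surely) to the permutation that sorts $(U_1,\ldots,U_N)$ into $(U_{(1)},\ldots,U_{(N)})$, again by the monotonicity of $G$. Hence the relabelling $J_i\mapsto J_j'$ attached to the $Y_{(j)}$ is the same relabelling attached to the $U_{(j)}$. Because $(J_i)_{1\le i\le N}$ is assumed independent of $(Y_i)_{1\le i\le N}$, and because $(U_i)_{1\le i\le N}$ is a deterministic function of $(Y_i)_{1\le i\le N}$, the collection $(J_i)$ remains independent of $(U_i)$, so the joint law of $(J_1',\ldots,J_N',U_{(1)},\ldots,U_{(N)})$ coincides with the joint law obtained by directly generating i.i.d. uniform variates.

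Finally I would substitute term by term into the formula of Lemma \ref{discrete AD} and collect the summations in the order displayed in the statement to obtain the claimed equality in distribution. The main (mild) obstacle is not mathematical depth but bookkeeping: one must be careful that the independence assumption and the monotonicity of $G$ together guarantee that the same pairing $(J_j',U_{(j)})$ arises on the right-hand side, and that continuity of $G$ is what legitimises the probability integral transform. No additional estimates are required.
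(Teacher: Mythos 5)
Your proposal is correct and follows essentially the same route as the paper: apply the probability integral transform so that $U_i=G(Y_i)$ are i.i.d.\ uniform, use monotonicity of $G$ to identify $G(Y_{(i)})=U_{(i)}$, and substitute into the formula of Lemma \ref{discrete AD}. The only difference is that you spell out the bookkeeping for the jump sizes and the preservation of independence, which the paper leaves implicit.
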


\begin{proof}
Since for $1\leq i\leq N$, $U_{i}=G(Y_{i})$ and $Y_{i}$ is a sequence of i.i.d.
random variable with continuous distribution $G$, the probability integral transformation
theorem implies that $(U_{i})_{1\leq i\leq N}$ is a sequence of i.i.d. random variable
uniformly distributed on the interval $[0,1]$. The proof is completed by considering
the order statistic of $(U_{i})_{1\leq i\leq N}$ in Lemma \ref{discrete AD}.
\end{proof}

%\begin{lemma}\label{app of distance}
%Let $G$ be a continuous cdf, $P\sim DP(a, H)$ and $P_{N}$ be the approximation of the Dirichlet process
%in \eqref{approx of DP}. Then, for a fixed $t\in\mathbb{R}$ where $\frac{1}{G(t)(1-G(t))}<\infty$, $d_{AD}(P_{N},G)\xrightarrow{a.s.}
%d_{AD}(P,G)$ as $N\rightarrow\infty$.
%\end{lemma}

%\begin{proof}
%Since $P_{N}(t) \xrightarrow{a.s.} P(t)$, the result follows by the dominated convergence theorem.
%\end{proof}

\subsection{Mahalanobis distance}

Mahalanobis distance measures the distance of  $m$-variate
point $\mathbf{Y}$ generated from a known distribution $F_{\theta}$ to the
mean $\boldsymbol{\mu}_{m} =E_{\theta}(\mathbf{Y})$ of the distribution. Let $\Sigma_{m\times m}$ be the
covariance matrix of the $m$-variate distribution $F_{\theta}$, the Mahalanobis
distance is defined as
\begin{equation}\label{Mah}
D_{M}(\mathbf{Y})=\sqrt{(\mathbf{Y}-\boldsymbol{\mu}_{m})^{T}\Sigma^{-1}_{m\times m}
(\mathbf{Y}-\boldsymbol{\mu}_{m})}.
\end{equation}

Note that, \eqref{Mah} is limited to the cases when both $\boldsymbol{\mu}_{m}$
and $\Sigma_{m\times m}$ are known. However, in most cases, the mean and covariance of $F_{\theta}$ exist but
unknown. For such cases, the Mahalanobis distance is defined based on the measuring
of distance between a subject's data and the mean of all observations in an observed sample.
To be more precise, given a sample of $n$ independent $m$-variate
$\mathbf{y}_{1},\ldots , \mathbf{y}_{n}$, the sample Mahalanobis distance of $\mathbf{y}_{i}$ to the sample mean $\overline{\mathbf{y}}$,
denoted by $d_{M}(\mathbf{y}_{i})$, is defined by
\begin{equation}\label{sample-mah}
d_{M}(\mathbf{y}_{i})=\sqrt{(\mathbf{y}_{i}-\overline{\mathbf{y}})^{T}S^{-1}_{y}(\mathbf{y}_{i}-\overline{\mathbf{y}})},
\end{equation}
where $S_{y}$ denotes the sample covariance matrix. Some interesting properties of the
Mahalanobis distance are presented in \cite{Johnson}. For example, when the
parent population is $m$-variate normal, for large enough values of $n$ and $n-m$
($n, n-m>30$), each of the squared distance $d_{M}^{2}(\mathbf{y}_{1}), \ldots
, d_{M}^{2}(\mathbf{y}_{n})$ behave like a chi-square random variable with $m$ degrees of
freedom.
\section{ Bayesian nonparametric approach for assessing multivariate normality}

In this section, a new test for assessing multivariate normality
is presented. For this purpose, consider the family of $m$-variate normal distribution
$\mathcal{F}=\{N_{m}(\boldsymbol{\mu}_{m},\Sigma_{m\times m}):\boldsymbol{\mu}_{m}\in\mathbb{R}^{m}
,\det (\Sigma_{m\times m}) >0\}$. Let $\textbf{Y}_{1},\ldots,\textbf{Y}_{n}$ be a random sample
from $m$-variate distribution $F$. The  problem under consideration is to test the hypothesis
\begin{equation}\label{test1}
\mathcal{H}_{0}:F\in \mathcal{F},
\end{equation}
using the Bayesian nonparametric framework. The first step is to
reduce the  multivariate problem to a univariate
problem. One way to accomplish that is through the Mahalanobis distance. For this, assume that $\boldsymbol{\widehat{\mu}}_{m}=\overline{\mathbf{Y}}$ and $\widehat{\Sigma}_{m\times m}=S_{Y}$ are the sample mean and sample covariance matrix, respectively. Then, $N_{m}(\widehat{\boldsymbol\mu}_{m},\widehat{\Sigma}_{m\times m})$
is the best representative of the family $\mathcal{F}$ to compare with distribution $F.$ Define $$D_{M}(\mathbf{Y}_{i})=\sqrt{(\mathbf{Y}_{i}-\boldsymbol{\widehat{\mu}}_{m})^{T}
\widehat{\Sigma}_{m\times m}^{-1}(\mathbf{Y}_{i}-\boldsymbol{\widehat{\mu}}_{m})} \ \ \ \ \text { for }1\leq i
\leq n.$$
Assume that $F_{(m)}$ is the cdf of the chi-square
distribution with $m$ degrees of freedom and $(D_{M}^{2}(\mathbf{Y}_{i}))_{1\leq i
\leq n}$ is a sequence of random variables with continuous distribution function $P$.
From  Johnson and Wichern \cite{Johnson}, if $\mathcal{H}_{0}$ is true, then we expect that $P$ is (approximately) the same as $F_{(m)}$. Thus, testing \eqref{test1} is equivalent to testing

\begin{equation}\label{test2}
\mathcal{H}_{0}:P=F_{(m)}.
\end{equation}
%Note that, when  $\boldsymbol{\mu_{m}$ or $\Sigma}_{m\times m}$ in $\mathcal{H}_{0}$ is known, we consider its known value in $D_{M}(\mathbf{Y}_{i})$.

For testing \eqref{test2}, let $\mathbf{y}_{m\times n}=(\mathbf{y}_{1},\ldots ,\mathbf{y}_{n})$ be
an observed sample from $F$ and $d=(d_{M}^{2}(\mathbf{y}_{1}),\ldots,
d_{M}^{2}(\mathbf{y}_{n}))$ be the corresponding observed squared Mahalanobis distance.
If $P\sim DP(a,F_{(m)})$, for a given choice of $a$, by \eqref{pos base measure}, then $P_{d}=P|d\sim DP(a+n, H_{d})$.
From \eqref{sample-mah},
if $\mathcal{H}_{0}$ is true, then the sampled observations $d_{M}^{2}(\mathbf{y}_{1}),\ldots, d_{M}^{2}
(\mathbf{y}_{n})$ would be approximately independent chi-square with $m$ degrees of
freedom. Thus, the posterior
distribution of the distance between $P_{d}$ and $F_{(m)}$ should be more concentrated
around zero than the prior distribution of the distance between $P$ and $F_{(m)}$.
A good reason for choosing $H=F_{(m)}$ is to avoid prior-data
conflict. Prior-data conflict means that there is a tiny overlap between the effective support regions of
$P$ and $H$. It can lead to errors in the computation of the prior distribution
of the distance between $P$ and $F_{(m)}$ and then the value of the relative belief ratio and its relevant strength when $\mathcal{H}_{0}$ is not true, see for example Section 7, Table \ref{prior-data}. For more discussion
about prior-data conflict see \cite{Evans2006,Al-Labadi2017b,Al-Labadi2018,Nott}.

In the proposed test, we use Lemma \ref{discrete AD} to compute the Anderson-Darling distances
$d_{AD}(P_{N},F_{(m)})$
and $d_{AD}(P_{d_{N}},F_{(m)})$, where $P_{N}$ and $P_{d_{N}}$ are approximations of Dirichlet
processes $P\sim DP(a, H)$ and $P_{d}\sim DP(a+n, H_{d})$, respectively. Then, the relative belief ratio is used to compare
the concentration of the posterior and the prior distribution of $d_{AD}(P_{d_{N}},F_{(m)})$
and $d_{AD}(P_{N},F_{(m)})$, respectively, at zero.
%By Lemma \ref{app of distance}, we approximate the posterior distribution of $d_{AD}(P_{d},F_{(m)})$ by $d_{AD}(P_{d_{N}},F_{(m)})$ where $P_{d_{N}}$ is the approximation of the Dirichlet process $P_{d}\sim DP(a+n, H_{d})$.

Another significant concern to validate the test is determining suitable values of $a$.  The following Lemma presents the expectation and variance of the prior distribution of the Anderson-Darling distance. For the Cram\'{e}r-von  Mises distance $d_{CvM}$, Al-Labadi and Evans \cite{Al-Labadi2018} showed that $E_{P}\left( d_{CvM}(P, H)\right)=1/6(a+1)$.

\begin{lemma}\label{Exp-Var of AD}
Let $P\sim DP(a,H)$ and $d_{AD}(P, H)$ be the Anderson-Darling distance between $P$ and
$H$, then
\begin{enumerate}
\item[$i.$] $E_{P}\left( d_{AD}(P, H)\right)=\dfrac{1}{a+1}$,
\item[$ii.$] $Var_{P}\left( d_{AD}(P, H)\right)=\dfrac{2\left( (\pi^{2}-9)a^{2}+(30-2\pi^{2})a-
3\pi^{2}+36\right)}{3(a+1)^{2}(a+2)(a+3)}$,
\end{enumerate}
where $E_{P}\left( d_{AD}(P, H)\right)$ and $V_{P}\left( d_{AD}(P, H)\right)$ are the expectation and variance of
the prior distribution of $d_{AD}(P, H)$ with respect to $P$, respectively.
\end{lemma}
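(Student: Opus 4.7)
The plan is to reduce both computations to integrals that can be evaluated using the marginal Beta and joint Dirichlet distributions of $P$ under $DP(a, H)$, together with the standard Beta moment formulas. First I would write $d_{AD}(P, H) = \int (P(t) - H(t))^{2}/[H(t)(1 - H(t))]\, dH(t)$ and apply Fubini to move the expectation inside. For part (i), since $P((-\infty, t]) \sim \text{Beta}(aH(t), a(1 - H(t)))$, the integrand collapses to $\text{Var}_{P}(P(t))/[H(t)(1-H(t))] = 1/(a+1)$, which integrates trivially to $1/(a+1)$.

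For part (ii) I would first write $\text{Var}_{P}(d_{AD}) = E_{P}[d_{AD}^{2}] - 1/(a+1)^{2}$ using part (i), then expand the square and apply Fubini again to obtain
\begin{equation*}
E_{P}[d_{AD}^{2}] = \iint \frac{E_{P}[(P(s) - H(s))^{2} (P(t) - H(t))^{2}]}{H(s)(1 - H(s))\, H(t)(1 - H(t))}\, dH(s)\, dH(t).
\end{equation*}
The key step is computing the fourth mixed moment for $s \le t$. Since $(P(s), P(t) - P(s), 1 - P(t))$ is Dirichlet with parameters $(aH(s), a(H(t)-H(s)), a(1-H(t)))$, the standard Dirichlet aggregation/conditioning property lets me write $P(s) = B W$ with $B := P(t) \sim \text{Beta}(aH(t), a(1-H(t)))$ and $W \sim \text{Beta}(aH(s), a(H(t) - H(s)))$ independent. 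Setting $u = H(s)$, $v = H(t)$, conditioning on $B$ gives
\begin{equation*}
E\bigl[(BW - u)^{2} (B - v)^{2}\bigr] = \text{Var}(W)\, E[B^{2}(B-v)^{2}] + (u/v)^{2}\, E[(B-v)^{4}],
\end{equation*}
and plugging in the known Beta second, third, and fourth moments simplifies, after some algebra, to the compact form
\begin{equation*}
M(u, v) = \frac{u(1-v)\bigl[6 + (a-6)\{2u(1-v) + v(1-u)\}\bigr]}{(a+1)(a+2)(a+3)}.
\end{equation*}

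Next I would change variables to $(u, v) = (H(s), H(t))$ and use the symmetry of the integrand to restrict to the triangle $u \le v$, giving
\begin{equation*}
E_{P}[d_{AD}^{2}] = \frac{2}{(a+1)(a+2)(a+3)} \int_{0}^{1}\!\!\int_{0}^{v} \frac{6 + (a-6)[2u(1-v) + v(1-u)]}{v(1-u)}\, du\, dv.
\end{equation*}
The pairing $2u(1-v) + v(1-u)$ is precisely what makes the inner integral split cleanly into elementary antiderivatives plus a $\log(1-v)/v$ piece. The outer integration then reduces to the standard evaluations $\int_{0}^{1} \log(1-v)/v\, dv = -\pi^{2}/6$ and $\int_{0}^{1} (1-v)\log(1-v)/v\, dv = 1 - \pi^{2}/6$ (both consequences of $\mathrm{Li}_{2}(1) = \pi^{2}/6$). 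Subtracting $1/(a+1)^{2}$ and placing everything over the common denominator $3(a+1)^{2}(a+2)(a+3)$ then produces the stated formula.

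The main obstacle I anticipate is the algebraic consolidation of the fourth mixed moment into the factored form for $M(u,v)$: combining $E[B^{2}(B-v)^{2}]$ with the fourth central Beta moment $E[(B-v)^{4}]$ and collecting terms in the right way is where degree and sign slips are easiest, and spotting the symmetric regrouping $2u(1-v)+v(1-u)$ is what makes the subsequent double integral transparent. Once that form is in hand, the appearance of $\pi^{2}$ in the final expression is only a single dilogarithm evaluation away.
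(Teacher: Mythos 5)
Your proposal is correct: I checked that your claimed mixed-moment form $M(u,v)$ is exactly the paper's integrand in disguise (the paper writes the bracket as $(a-6)((3t-2)s-t)-6$ over $t(s-1)$, which after sign flips is your $6+(a-6)[2u(1-v)+v(1-u)]$ over $v(1-u)$), and carrying out your inner and outer integrations with $\mathrm{Li}_2(1)=\pi^2/6$ reproduces $E_P(d_{AD}^2)=\frac{a(2\pi^2-15)-6(\pi^2-15)}{3(a+1)(a+2)(a+3)}$ and hence the stated variance. The overall skeleton is the same as the paper's (Fubini, reduction to the uniform case, symmetrization to the triangle, a dilogarithm at the end), but your computation of the fourth mixed moment is genuinely different and arguably cleaner: the paper expands $\bigl(P(s)+P((s,t])-t\bigr)^2\bigl(P(s)-s\bigr)^2$ into roughly a dozen terms and evaluates each with the joint Dirichlet moment formula $E_P\bigl(P^i(s)P^j((s,t])\bigr)=\frac{\Gamma(a)}{\Gamma(a+i+j)}\frac{\Gamma(as+i)}{\Gamma(as)}\frac{\Gamma(a(t-s)+j)}{\Gamma(a(t-s))}$, whereas you exploit the neutrality property $P(s)=BW$ with $B=P(t)$ and $W\sim\mathrm{Beta}(au,a(v-u))$ independent, condition on $B$, and reduce everything to two univariate Beta moments $E[B^2(B-v)^2]$ and $E[(B-v)^4]$. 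Your route trades the paper's brute-force bookkeeping for one conditioning identity plus standard Beta moments, which is less error-prone and makes the factored form of $M(u,v)$ easier to see; the paper's route avoids invoking the aggregation/independence structure of the Dirichlet distribution and needs only the raw joint moment formula.
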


\begin{proof}
The proof is given in Appendix B.
\end{proof}

The next corollary highlights the effect of the value $a$ on the prior distribution of the Anderson-Darling
distance.

\begin{corollary}\label{limit of Exp-Var}
Let $P\sim DP(a,H)$ and $H=F_{(m)}$ be the cdf of the chi-square
distribution with $m$ degrees of freedom. Suppose that $a\rightarrow \infty$, then
\begin{enumerate}
\item[$i$] $E_{P}\left( d_{AD}(P, H)\right)\rightarrow 0$, and,
$Var_{P}\left( d_{AD}(P, H)\right)\rightarrow 0$.
\item[$ii.$] \mbox{ }$E_{P}\left((a+1)d_{AD}(P, H)\right)\rightarrow 1$, and,
$Var_{P}\left((a+1)d_{AD}(P, H)\right)\rightarrow\dfrac{2}{3}(\pi^2-9)$.
\item[$iii.$] \mbox{ }$d_{AD}\xrightarrow{qm}0$, and, $d_{AD}\xrightarrow{a.s}0$.
\end{enumerate}

Where $``\xrightarrow{qm}"$ denotes convergence in quadratic mean.
\end{corollary}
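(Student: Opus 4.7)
My plan is to derive every claim directly from the explicit formulas supplied by Lemma \ref{Exp-Var of AD}, with no further probabilistic input beyond a Chebyshev/Borel-Cantelli argument for the almost sure statement.

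For part (i), $E_P(d_{AD}(P,H)) = 1/(a+1) \to 0$ is immediate, and the variance expression has a numerator of degree two in $a$ and a denominator $(a+1)^2(a+2)(a+3)$ of degree four, so the ratio vanishes at rate $O(1/a^2)$ as $a \to \infty$. For part (ii), linearity gives $E_P((a+1)d_{AD}(P,H)) = (a+1)\cdot\frac{1}{a+1} = 1$ identically, so the limit is trivially $1$. For the rescaled variance, the $(a+1)^2$ cancels with the corresponding factor in the denominator, producing
\begin{equation*}
Var_P\bigl((a+1)d_{AD}(P,H)\bigr) = \frac{2\bigl((\pi^2-9)a^2 + (30-2\pi^2)a - 3\pi^2 + 36\bigr)}{3(a+2)(a+3)},
\end{equation*}
which tends to $\tfrac{2}{3}(\pi^2-9)$ upon dividing numerator and denominator by $a^2$ and taking $a\to\infty$.

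For part (iii), convergence in quadratic mean is equivalent to $E_P(d_{AD}^2) = Var_P(d_{AD}) + (E_P(d_{AD}))^2 \to 0$, and both summands vanish by part (i). For almost sure convergence, Markov's inequality yields, for every fixed $\epsilon > 0$,
\begin{equation*}
\Pr\bigl(d_{AD}(P,H) > \epsilon\bigr) \le E_P(d_{AD}^2)/\epsilon^2 = O(1/a^2).
\end{equation*}
Taking any sequence $a_n \to \infty$ with $\sum_n a_n^{-2} < \infty$ (for instance $a_n = n$), the first Borel-Cantelli lemma then gives $\Pr\bigl(d_{AD}(P_{a_n},H) > \epsilon \text{ i.o.}\bigr) = 0$, so $d_{AD}(P_{a_n},H) \to 0$ almost surely as $a_n\to\infty$.

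No step presents a real obstacle; the work is entirely algebraic once Lemma \ref{Exp-Var of AD} is in hand. The only mildly delicate point is specifying a common probability space that carries the family $\{P_a\}_{a>0}$ for the a.s.\ statement, which can be arranged by invoking the Ferguson series representation \eqref{series-dp} with a single underlying sequence of $\Gamma_i$ and $Y_i$ shared across all values of $a$.
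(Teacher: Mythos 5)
Your proof is correct and follows essentially the same route as the paper: parts (i) and (ii) by taking limits in the explicit formulas of Lemma \ref{Exp-Var of AD}, quadratic-mean convergence from $E_{P}(d_{AD}^{2})=O(1/a^{2})$, and almost sure convergence via Markov's inequality plus the first Borel--Cantelli lemma along a sequence $a_n\to\infty$ (the paper uses $a=kc$ where you use $a_n=n$). Your closing remark about realizing the family $\{P_a\}$ on a common probability space via the series representation addresses a point the paper leaves implicit, and is a welcome extra precaution rather than a deviation in method.
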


\begin{proof}
The proof of (i) and (ii) are followed immediately by letting $a\rightarrow\infty$ in part (i) and (ii) of Lemma
\ref{Exp-Var of AD}. For (iii), the convergence in quadratic mean is followed by letting $a\rightarrow\infty$ in
$E_{P}\left( d_{AD}(P, H)\right)^{2}=\frac{a(2\pi^{2}-15)-6(\pi^{2}-15)}{3(a+3)(a+2)(a+1)}$; see the proof of Lemma \ref{Exp-Var of AD} in Appendix B. To prove the almost surely convergence,
assume that $a=kc$, for $k\in\lbrace 1, 2,\ldots,\rbrace$ and a fixed positive number $c$.
Then, for any $\epsilon>0$, $Pr\left\lbrace d_{AD}(P, H)>\epsilon\right\rbrace\leq\frac{E_{P}\left( d_{AD}(P, H)\right)^{2}}{\epsilon^{2}}$. Since, $\sum_{k=1}^{\infty}Pr\left\lbrace d_{AD}(P, H)>\epsilon\right\rbrace<\infty$, then, by the first Borel-Catelli lemma, $d_{AD}\xrightarrow{a.s}0$, as $k\rightarrow\infty$ ($a\rightarrow\infty$).
\end{proof}

\noindent Interestingly, the limit of the expectation and variance in part (ii) of Corollary
\ref{limit of Exp-Var} coincide with the limit of $E_{P}\left(n\, d_{AD}(F_{n}, H)\right)$ and
$Var_{P}\left(n\, d_{AD}(F_{n}, H)\right)$, given by Anderson and Darling \cite{Anderson},
 as $n\rightarrow\infty$, where $F_{n}$ is the
empirical cdf of the sample $d$.

According to part (i) of Corollary \ref{limit of Exp-Var}, with increasing
the value of $a$, the prior distribution of the distance becomes more concentrated about 0. As recommended
in Al-Labadi and Zarepour \cite{Al-Labadi2017a}, the value of $a$ should be at most $0.5\,n$, where
$n$ is the sample size; otherwise, the prior may become too influential.
On the other hand, if $\mathcal{H}_{0}$ is true,  with increasing the sample size $n$, the expectation of the posterior distribution
of the distance between $P_{d}$ and $F_{(m)}$ converges to zero. Also, if $\mathcal{H}_{0}$ is
not true, this expectation converges to a positive value. The following Lemma indicates
the effect of increasing the value of $a$ and sample size $n$ on the expectation of the posterior
distribution of $d_{AD}(P_{d},F_{(m)})$.

\begin{lemma}\label{Exp of posterior}
Let $P_{d}\sim DP(a+n,H_{d})$ and $H_{d}$ be as given by \eqref{pos base measure} with $H=F_{(m)}$,
the cdf of the chi-square distribution with $m$ degrees of freedom. We have
\begin{enumerate}
\item[$i.$]
$d_{AD}(P_{d}, F_{(m)})\xrightarrow{a.s.} 0$
as $a\rightarrow\infty$ (for fixed $n$).
\item[$ii.$] If $\mathcal{H}_{0}$ is true, then,
$d_{AD}(P_{d}, F_{(m)})\xrightarrow{a.s.} 0$
as $n\rightarrow\infty$ (for fixed $a$).
\item[$iii.$] If $\mathcal{H}_{0}$ is not true, then,
$\liminf d_{AD}(P_{d}, F_{(m)})\displaystyle{\overset{a.s.}{>}}0$
as $n\rightarrow\infty$ (for fixed $a$).
\item[$iv.$] If $\mathcal{H}_{0}$ is not true, then,
$\liminf E_{P_{d}}\left(d_{AD}(P_{d}, F_{(m)})\right)\displaystyle{\overset{a.s.}{>}}0$
as $n\rightarrow\infty$ (for fixed $a$).
\end{enumerate}
\end{lemma}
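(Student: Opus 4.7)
The plan is to work from the integral definition of $d_{AD}$ and to exploit the explicit convex decomposition $H_d = \frac{a}{a+n}F_{(m)} + \frac{n}{a+n}F_n$. Two mechanisms drive all four parts: (A) the concentration parameter $a+n$ of $P_d$ tends to infinity in every regime considered, forcing $P_d$ to collapse onto its (random) base $H_d$; and (B) $H_d$ itself drifts either to $F_{(m)}$ (parts (i), (ii)) or to a different limit $P_\ast \neq F_{(m)}$ (parts (iii), (iv)). The key input inherited from the excerpt is Corollary \ref{corollary}, which shows that the law of $d_{AD}(P_d, H_d)$ does not depend on $H_d$, so Lemma \ref{Exp-Var of AD} gives $E[d_{AD}(P_d, H_d)] = 1/(a+n+1)$ with variance of order $1/(a+n)^2$. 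A Borel--Cantelli argument identical to the one used in Corollary \ref{limit of Exp-Var}(iii) then yields $d_{AD}(P_d, H_d) \xrightarrow{a.s.} 0$ whenever $a+n \to \infty$.

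Next I would insert $H_d$ as an intermediate distribution and apply $(x+y)^2 \leq 2x^2 + 2y^2$ to obtain
\[
d_{AD}(P_d, F_{(m)}) \leq 2\int \frac{(P_d - H_d)^2}{F_{(m)}(1-F_{(m)})}\,dF_{(m)} + 2\,d_{AD}(H_d, F_{(m)}).
\]
For (i), with $n$ fixed and $a \to \infty$, $H_d \to F_{(m)}$ uniformly, so $d_{AD}(H_d, F_{(m)}) \to 0$; the cross integral vanishes almost surely because $\sup_t|P_d(t)-H_d(t)|\to 0$ as the DP concentrates at its base. Part (ii) follows by the same decomposition once we establish that under $\mathcal{H}_0$ the empirical cdf $F_n$ of the plug-in squared Mahalanobis distances satisfies $F_n \xrightarrow{a.s.} F_{(m)}$ uniformly; this follows from $\widehat{\boldsymbol{\mu}}_m \to \boldsymbol{\mu}_m$ and $\widehat{\Sigma}_{m\times m}\to \Sigma_{m\times m}$ almost surely, together with a Glivenko--Cantelli argument applied to the asymptotically iid $\chi^2_m$ variates $D_M^2(\mathbf{Y}_i)$.

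For (iii), under the alternative $F_n \to P_\ast$ almost surely with $P_\ast \neq F_{(m)}$, and with $a$ fixed we get $H_d \to P_\ast$ uniformly. A weak-convergence argument based on the explicit formula of Lemma \ref{discrete AD} (using $P_d \Rightarrow P_\ast$ after the DP concentrates at $H_d \to P_\ast$) yields $d_{AD}(P_d, F_{(m)}) \to d_{AD}(P_\ast, F_{(m)})$ almost surely; the limit is strictly positive because $P_\ast \neq F_{(m)}$ and $F_{(m)}$ is continuous, so the integrand in $d_{AD}(\cdot,F_{(m)})$ is strictly positive on a set of positive $dF_{(m)}$-measure. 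Part (iv) then follows from Fatou's lemma applied conditionally on the data:
\[
0 < E_{P_d}\!\left[\liminf_{n\to\infty} d_{AD}(P_d, F_{(m)})\right] \leq \liminf_{n\to\infty} E_{P_d}\!\left[d_{AD}(P_d, F_{(m)})\right],
\]
where the left inequality uses (iii) and requires a coupling (via the stick-breaking or series representation) of the $P_d$ across different $n$ on a common probability space.

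The principal obstacle is that $d_{AD}$ is neither symmetric nor a true metric, so the standard triangle inequality is unavailable and the weighting $1/[F_{(m)}(1-F_{(m)})]$ appearing in the cross integral does not match the weighting that Corollary \ref{corollary} naturally controls. One must therefore argue separately that the cross integral is negligible, by comparing the two weightings via the Radon--Nikodym derivative of $F_{(m)}$ with respect to $H_d$ and paying careful attention to the tails, where $1/[F_{(m)}(1-F_{(m)})]$ blows up. A secondary technical point, relevant in (ii), is the dependence among the $d_M^2(\mathbf{Y}_i)$ induced by the common plug-in estimators $\widehat{\boldsymbol{\mu}}_m$ and $\widehat{\Sigma}_{m\times m}$, which prevents an off-the-shelf iid Glivenko--Cantelli and forces one to invoke uniform consistency of those estimators.
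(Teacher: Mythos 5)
Your overall strategy coincides with the paper's at the conceptual level (the posterior DP collapses onto its base $H_d$, while $H_d$ drifts to $F_{(m)}$ under $\mathcal{H}_0$ or to some $P_{\mathrm{true}}\neq F_{(m)}$ otherwise), but every technical step is executed differently, and two of your steps are left genuinely open. For (i)--(ii) the paper never inserts $H_d$ inside the squared integrand: it bounds $(P_d(t)-F_{(m)}(t))^2\le\sup_t|P_d(t)-F_{(m)}(t)|$, pulls the sup outside the integral so that the weight contributes only a constant factor, and then applies the ordinary triangle inequality to the \emph{Kolmogorov} distance, $\sup_t|P_d-F_{(m)}|\le\sup_t|P_d-H_d|+\sup_t|H_d-F_{(m)}|$, invoking James (2008) for the first term and Glivenko--Cantelli/Polya for the second. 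This sidesteps entirely the weight mismatch you correctly flag as your ``principal obstacle'' (though note the paper's own constant $\int dF_{(m)}/[F_{(m)}(1-F_{(m)})]$ is in fact divergent, so your worry about the tails is substantive even for the published argument). Your cross term is closable without any Radon--Nikodym comparison: $E_{P_d}(P_d(t)-H_d(t))^2=H_d(t)(1-H_d(t))/(a+n+1)$, and since $H_d\ge\frac{a}{a+n}F_{(m)}$ and $1-H_d\ge\frac{a}{a+n}(1-F_{(m)})$, the ratio $H_d(1-H_d)/[F_{(m)}(1-F_{(m)})]$ is bounded, so the expected cross integral is $O(1/(a+n))$; but you do not carry this out, and upgrading to almost-sure convergence still needs a Borel--Cantelli step as in Corollary \ref{limit of Exp-Var}. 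For (iii), your claim that $d_{AD}(P_d,F_{(m)})\to d_{AD}(P_{\ast},F_{(m)})$ under weak convergence is stronger than needed and not obviously true (the Anderson--Darling functional is only lower semicontinuous because of the unbounded weight); the paper instead uses the elementary chain $d_{AD}\ge 4\,d_{CvM}\ge\tfrac{4}{3}\bigl(\sup_t|P_d-F_{(m)}|\bigr)^3$ (Choi--Bulgren) together with the reverse triangle inequality in sup-norm, which is both simpler and sufficient.

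The real gap is part (iv). The quantity there is the conditional expectation given the data, i.e.\ a deterministic functional of the sample for each $n$; the paper computes it exactly, $E_{P_d}(P_d(t)-F_{(m)}(t))^2=H_d(t)(1-H_d(t))/(a+n+1)+(H_d(t)-F_{(m)}(t))^2$, integrates against the Anderson--Darling weight, and applies Fatou's lemma \emph{over $t$} to pass the $\liminf$ in $n$ inside the integral, where the second term tends to the strictly positive limit $(P_{\mathrm{true}}-F_{(m)})^2/[F_{(m)}(1-F_{(m)})]$. Your route -- Fatou over the posterior randomness combined with part (iii) -- requires realizing the posteriors $P_d$ for all $n$ on a common probability space so that $\liminf_n d_{AD}(P_d,F_{(m)})$ is a well-defined random variable to which (iii) applies; you acknowledge this coupling but do not construct it, and it is not canonical since both the concentration parameter and the base measure of the posterior change with $n$. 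This step should be replaced by the direct computation of $E_{P_d}$, which makes (iv) independent of (iii).
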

\begin{proof}
The proof is given in Appendix C.
\end{proof}

\noindent Part (i) of Lemma \ref{Exp of posterior} confirms that for a too large value of $a$ (relative to $n$), we
may accept $\mathcal{H}_{0}$ even when
$\mathcal{H}_{0}$ is not true. More discussion about this point is given in Section 7.
\section{Computational algorithm}

The steps for implementing the proposed test is detailed in this section. Note that, since no closed forms of the densities of $D=d_{AD}(P_{N},F_{(m)})$ and  $D_{d}=d_{AD}(P_{d_{N}},F_{(m)})$ are available, simulation is used to approximate  relative belief ratio. The following gives a computational algorithm to test $\mathcal{H}_{0}$. This algorithm is a revised version of Algorithm B of Al-Labadi and Evans \cite{Al-Labadi2018}.

\noindent\textbf{Algorithm B }\textit{(}\emph{Relative belief algorithm for
testing multivariate normality}\textit{):}

\smallskip
\noindent1. Use Algorithm A to generate (approximately) a $P$  from
$DP(a,F_{(m)})$. \textbf{\smallskip}

\noindent2. Compute $d_{AD}(P_{N},F_{(m)})$ as in Lemma \ref{discrete AD}.\textbf{\smallskip}

\noindent3. Repeat steps (1)-(2) to obtain a sample of $r_{1}$ values from the
prior of $D$.\textbf{\smallskip}

\noindent4. Use Algorithm A to generate (approximately) a $P_d$ from
$DP(a+n,H_{d})$.\textbf{\smallskip}

\noindent5. Compute $d_{AD}(P_{d_{N}},F_{(m)})$.\textbf{\smallskip}

\noindent6. Repeat steps (4)-(5) to obtain a sample of $r_{2}$ values  of $D_{d}$.\textbf{\smallskip}

\noindent7. Let $M$ be a positive number. Let $\hat{F}_{D}$ denote the
empirical cdf of $D$ based on the prior sample in step (3) and for $i=0,\ldots,M,$
let $\hat{d}_{i/M}$ be the estimate of $d_{i/M},$ the $(i/M)$-th prior
quantile of $D.$ Here $\hat{d}_{0}=0$, and $\hat{d}_{1}$ is the largest value
of the prior sample in step (3). Let $\hat{F}_{D}(\cdot\,|\,d)$ denote the empirical cdf of $D_{d}$ based
on the posterior sample in step (6). For $q\in\lbrack\hat{d}_{i/M},\hat
{d}_{(i+1)/M})$, estimate $RB_{D}(q\,|\,d)={\pi_D(q|d)}/{\pi_D(q)}$ by
\begin{equation}
\widehat{RB}_{D}(q\,|\,d)=M\{\hat{F}_{D}(\hat{d}_{(i+1)/M}\,|\,d)-\hat{F}%
_{D}(\hat{d}_{i/M}\,|\,d)\}, \label{rbest}%
\end{equation}
the ratio of the estimates of the posterior and prior contents of $[\hat
{d}_{i/M},\hat{d}_{(i+1)/M}).$ It follows that, we estimate $RB_{D}(0\,|\,d)={\pi_D(0|d)}/{\pi_D(0)}$
 by
$\widehat{RB}_{D}(0\,|\,d)=$ $M\widehat{F}_{D}(\hat{d}_{p_{0}}\,|\,d)$ where
$p_{0}=i_{0}/M$ and $i_{0}$ is chosen so that $i_{0}/M$ is not too small
(typically $i_{0}/M\approx0.05)$.\textbf{\smallskip}

\noindent8. Estimate the strength $DP_{D}(RB_{D}(q\,|\,d)\leq RB_{D}%
(0\,|\,d)\,|\,d)$ by the finite sum
\begin{equation}
\sum_{\{i\geq i_{0}:\widehat{RB}_{D}(\hat{d}_{i/M}\,|\,d)\leq\widehat{RB}%
_{D}(0\,|\,d)\}}(\hat{F}_{D}(\hat{d}_{(i+1)/M}\,|\,d)-\hat{F}_{D}(\hat
{d}_{i/M}\,|\,d)). \label{strest}%
\end{equation}

\noindent For fixed $M,$ as $r_{1}\rightarrow\infty,r_{2}\rightarrow\infty,$
then $\hat{d}_{i/M}$ converges almost surely to $d_{i/M}$ and (\ref{rbest})
and (\ref{strest}) converge almost surely to $RB_{D}(q\,|\,d)$ and
$DP_{D}(RB_{D}(q\,|\,d)\leq RB_{D}(0\,|\,d)\,|\,d)$, respectively.

As recommended in the next section, one should try different values of $a$ to make sure the right conclusion has been obtained. The consistency of the proposed test is achieved by Proposition 6 in Al-Labadi and Evans \cite{Al-Labadi2018}.
\section{Numerical results}
\subsection{Simulation Study}
In this subsection, the performance of the proposed test is illustrated through several examples. Hereafter, let $\mathbf{c}_{m}$ be the $m$-dimensional column vectors of $c$'s, $I_{m}$ be the $m\times m$ identity matrix, $A_{m}$ be an $m\times m$ matrix with $1$'s on the main diagonal and $0.1$'s elsewhere, and $B_{m}$ be an $m\times m$ matrix with $0.25$'s on the main diagonal and $0.2$'s elsewhere. The following notations have been used: $t_{r}$ for the $t$-Student distribution with $r$ degrees of freedom, $\chi^2_{r}$ for the chi-square distribution with $r$ degrees of freedom, $E(\lambda)$ for the exponential distribution with rate $\lambda$, $C(a,b)$ for the Cauchy distribution with location parameter $a$ and scale parameter $b$, $P_{VII}(1,1,r)$ for the Pearson type $VII$ (aka t-Student) distribution with location parameter 1, scale parameter 1 and $r$ degrees of freedom, $t_{r}(\mathbf{0}_{m},I_{m})$ for the $m$-variate $t$-student distribution with location parameter $\mathbf{0}_{m}$, scale parameter $I_{m}$ and $r$ degrees of freedom,  $LN_{m}\left(\mathbf{0}_{m},B_{m}\right)$ for the $m$-variate lognormal distribution with mean vector $\mathbf{0}_{m}$ and covariance matrix $B_{m}$, $\mathcal{S}^{m}(Q)$ for the $m$-variate spherical distribution with distribution $Q$ for radii, and $NMIX$ for the mixture distribution $0.9N_{m}(\mathbf{5}_{m},A_{m})+.1N_{m}(-\mathbf{5}_{m},A_{m})$. Also, for a univariate distribution $Q$, $(Q)^{m}$ denotes an $m$-variate distribution with $m$ identical and independent marginals $Q$, while, $Q_1\otimes\cdots\otimes Q_m$ denotes an $m$-variate distribution with $m$ independent marginals $Q_1,\ldots,Q_m$. In all cases, we generate a sample of size $50$ from an exact distribution and then we set $N=500$, $r_{1} = r_{2} = 1000$ and $M = 20$ in Algorithm B to compute the $RB$ and the corresponding strength. In addition, to study the sensitivity of the approach to the choice of the chosen concentration parameter, various values of $a$ are considered. The results are reported  in Table \ref{T exm4}. The $RB$ of the proposed test is compared to the Bayes factor $BF^{\scalebox{.6}{TM}}$ due to Tokdar and Martin \cite{Tokdar}. Recall that, similar to $RB$, $BF^{\scalebox{.6}{TM}}>1$  supports  $\mathcal{H}_{0}$, $BF^{\scalebox{.6}{TM}}<1$ supports $\mathcal{H}_{1}$ and $BF^{\scalebox{.6}{TM}}=1$ means no
evidence either way. To compute $BF^{\scalebox{.6}{TM}}$, we used the code provided by the same authors and posted at \url{http://www2.stat.duke.edu/~st118/Software/}. From Table \ref{T exm4}, it is seen that for the cases where $\mathcal{H}_0$ is true, the value of $RB$ is greater than $1$ and the strength of this evidence is $1$. This shows a strong evidence to accept $\mathcal{H}_0$. For instance, when $ N_{2}\left(\mathbf{0}_{2},A_{2}\right)$ and $a=15$ in Table \ref{T exm4}, the values of $RB$ and its relevant strength are $7.36$ and $1$, respectively. The prior and posterior density plots of the Anderson-Darling distance for this example are provided by the left side of part (a) of Figure \ref{density-plot} to show that the posterior density of the Anderson-Darling distance is more concentrated around zero than the prior density of the Anderson-Darling distance, when $\mathcal{H}_0$ is true. Also, the Q-Q plot (see Appendix D) of square ordered Mahalanobis distances related to the generated sample $\mathbf{y}_{2\times n}$ against the square root of the percentage points of the chi-square distribution with $m=2$ degrees of freedom is provided by the right side of part (a) of Figure \ref{density-plot}. This figure shows the close relationship between the distribution of observed points and chi-square distribution with $2$ degrees of freedom for this example. Notice that, $(P_{VII}(1,1,r))^{m}$, for $r\geq10$ is a symmetric distribution and has very similar behavior with an $m$-variate normal distribution, and then $\mathcal{H}_0$ is accepted in all cases of this example. For the cases where $\mathcal{H}_0$ is not true, the value of $RB$ is less than $1$ and its relevant strength is small for $a>1$. For example, when $t_{3}(\mathbf{0}_{2},I_{2})$ and $a=15$, the value of $RB$ and its relevant strength are $0.073$ and $0.085$, respectively. The prior and posterior density plots of the Anderson-Darling distance are also provided by the left side of part (b-d) of Figure \ref{density-plot} for various alternative examples to show that the prior density of the Anderson-Darling distance is more concentrated around zero than the posterior density of the Anderson-Darling distance, when $\mathcal{H}_0$ is not true. Note that, while the proposed test rejects $\mathcal{H}_0$ in all examples with $a=15$, the test of Tokdar and Martin \cite{Tokdar} fails to reject $\mathcal{H}_0$ in some case. See, for example, the results for $\mathcal{S}^{m}(LN(0,0.25))$, $ \mathcal{S}^{m}(\chi^{2}_{5})$ and $ t_{3}(\mathbf{0}_{m},I_{m})$ when $a=15$ and $m>2$.

%%%%%%% Table of example 4 %%%%%%%

\begin{table}[h!]
\setlength{\extrarowheight}{.1mm}
\setlength{\tabcolsep}{2.5 mm}
\centering
\caption{Relative belief ratios and strength (str)-s for testing the $m$-variate normality assumption
with various alternatives and choices of $a$.}\label{T exm4}
\scalebox{.75}{
\begin{tabular}{lccccccc}
\hline\noalign{\smallskip}
\multirow{2}[3]{*}{\bfseries True distribution}& \multirow{2}[3]{*}{$a$}&\multicolumn{2}{c}{$m=2$}&\multicolumn{2}{c}{$m=3$}&\multicolumn{2}{c}{$m=4$}\\\cmidrule(lr){3-4}\cmidrule(lr){5-6}\cmidrule(lr){7-8}
&& $RB$(str)&$BF^{\scalebox{.6}{TM}}$& $RB$(str)&$BF^{\scalebox{.6}{TM}}$& $RB$(str)&$BF^{\scalebox{.6}{TM}}$\\
\noalign{\smallskip}\hline\noalign{\smallskip}
 & 1 & 19.38(1) &0.197&$17.12(1)$&$10^{-7}$&$18.88(1)$&$2\times10^{-41}$\\
$ N_{m}\left(\mathbf{0}_{m},A_{m}\right)$
 &5 & $14.52(1)$ &$7.039$&$13.73(1)$&$8\times10^{-6}$&$12.99(1)$&$5\times10^{-8}$\\
  & 10 & $9.38(1)$ &$91.50$&$7.09(1)$&$2\times10^{11}$&$8.41(1)$&$10^{22}$\\
 & 15 & $7.36(1)$&$590.2$&$7.92(1)$&$4\times10^{13}$&$7.88(1)$&$4\times10^{22}$\\\noalign{\smallskip}\hline\noalign{\smallskip}

 & 1 & \scalebox{1}{0.24(0.012)} &\scalebox{1}{$3\times 10^{-14}$}&$0(0)$&$6\times10^{-30}$&$0(0)$&$10^{-24}$\\
 \scalebox{1}{$E(\frac{1}{2})\otimes (C(0,1))^{m-1}$ }
  &\scalebox{1}{5}  & \scalebox{1}{0.04(0.004)} &\scalebox{1}{$8\times 10^{-19}$}&$0(0)$&$10^{-50}$&$0(0)$&$5\times10^{-35}$\\
& \scalebox{1}{10} & \scalebox{1}{0(0)} &\scalebox{1}{$2\times 10^{-20}$}&$0(0)$&$2\times10^{-40}$&$0(0)$&$8\times10^{-28}$\\
  & \scalebox{1}{15} & \scalebox{1}{0(0)} &\scalebox{1}{$2\times 10^{-18}$}&$0(0)$&$4\times10^{-36}$&$0(0)$&$2\times 10^{-6}$\\\noalign{\smallskip}\hline\noalign{\smallskip}

 &  \scalebox{1}{1} & \scalebox{1}{0.76(0.054)} &\scalebox{1}{$2\times 10^{-33}$}&$0.1(0.003)$&$10^{-28}$&0(0)&$7\times10^{-34}$\\
 \scalebox{1}{$N(0,1)\otimes(t_{1})^{m-1}$ }
& \scalebox{1}{5}  & \scalebox{1}{0.70(0.067)} &\scalebox{1}{$3\times 10^{-50}$}&0(0)&$10^{-35}$&0(0)&$9\times10^{-45}$\\
 \scalebox{1}{} & \scalebox{1}{10} & \scalebox{1}{0.68(0.058)} &\scalebox{1}{$3\times 10^{-34}$}&$0(0)$&$3\times10^{-25}$&$0(0)$&$10^{-28}$\\
  & \scalebox{1}{15} & \scalebox{1}{0.60(0.055)} &\scalebox{1}{$2\times 10^{-31}$}&$0(0)$&$6\times10^{-15}$&$0(0)$&$10^{-23}$\\\noalign{\smallskip}\hline\noalign{\smallskip}

&1&$0(0)$&$10^{-9}$&$0(0)$&$10^{-20}$&$0(0)$&$10^{-39}$\\
$(P_{VII}(1,1,1))^{m}$\textsuperscript{$\dagger$}&5&$0(0)$&$2\times10^{-11}$&$0(0)$&$6\times10^{-31}$&$0(0)$&$8\times10^{-45}$\\
&10&$0(0)$&$3\times10^{-12}$&$0(0)$&$7\times10^{-26}$&$0(0)$&$4\times10^{-27}$\\
&15&$0(0)$&$10^{-10}$&$0(0)$&$3\times10^{-19}$&$0(0)$&$8\times10^{-13}$\\\noalign{\smallskip}\hline\noalign{\smallskip}

&1&$19.92(1)$&$7\times10^{-2}$&$19.82(1)$&$8\times10^{-7}$&$19.78(1)$&$10^{-5}$\\
$(P_{VII}(1,1,10))^{m}$&5&$17.68(1)$&$3\times10^{2}$&$16.02(1)$&$5\times10^{5}$&$15.84(1)$&$10^{16}$\\
&10&$12.88(1)$&$3\times10^{4}$&$12.76(1)$&$6\times10^{12}$&$11.06(1)$&$6\times10^{26}$\\
&15&$9.62(1)$&$5\times10^{4}$&$10.36(1)$&$10^{14}$&$9.80(1)$&$7\times10^{28}$\\\noalign{\smallskip}\hline\noalign{\smallskip}

  &  \scalebox{1}{1} & \scalebox{1}{1.76(0.321)} &\scalebox{1}{$4\times 10^{-6}$}&$2.12(0.376)$&$10^{-10}$&$1.92(0.363)$&$10^{-12}$\\
\scalebox{1}{$\mathcal{S}^{m}(LN(0,0.25))$\textsuperscript{$\ddagger$}}
& \scalebox{1}{5}  & \scalebox{1}{0.36(0.002)} &\scalebox{1}{$5\times 10^{-5}$}&$0.54(0.028)$&$4\times10^{4}$&$0.16(0.008)$&$10^{14}$\\
 & \scalebox{1}{10} & \scalebox{1}{0.11(0.001)} &\scalebox{1}{$10^{-3}$}&$0.16(0.001)$&$10^{11}$&$0.04(0.003)$&$3\times10^{23}$\\
  & \scalebox{1}{15} & \scalebox{1}{0.01(0.001)} &\scalebox{1}{$9\times 10^{-3}$}&$0.02(0.001)$&$10^{14}$&$0.06(0.004)$&$6\times10^{21}$\\\noalign{\smallskip}\hline\noalign{\smallskip}

  &\scalebox{1}{1}&\scalebox{1}{2.31(0.341)}&\scalebox{1}{0.408}&2.11(0.393)&$10^{-8}$&1.22(0.308)&$3\times10^{-38}$\\
\scalebox{1}{$ \mathcal{S}^{m}(\chi^{2}_{5})$}&\scalebox{1}{5}&\scalebox{1}{0.86(0.155)}&\scalebox{1}{294.02}&0.78(0.143)&$296.66$&0.34(0.002)&$7\times10^{8}$\\
&\scalebox{1}{10}&\scalebox{1}{0.56(0.028)}&\scalebox{1}{4459.4}&0.22(0.011)&$10^{10}$&0.12(0.003)&$5\times10^{20}$\\
&\scalebox{1}{15}&\scalebox{1}{0.32(0.001)}&\scalebox{1}{31538}&0.1(0.005)&$7\times10^{11}$&0.05(0)&$10^{19}$\\\noalign{\smallskip}\hline\noalign{\smallskip}

&\scalebox{1}{1}&\scalebox{1}{1.82(0.301)}&\scalebox{1}{$3\times 10^{-36}$}&$1.69(0.359)$&$2\times10^{-137}$&$1.92(0.340)$&$0$\\
\scalebox{1}{$ LN_{m}\left(\mathbf{0}_{m},B_{m}\right)$\textsuperscript{$\ast$}}&\scalebox{1}{5}&\scalebox{1}{0.22(0.022)}& \scalebox{1}{$3\times 10^{-64}$}&$0.06(0.003)$&$10^{-158}$&$0.26(0.006)$&$6\times10^{-232}$\\
&\scalebox{1}{10}&\scalebox{1}{0.08(0.004)}& \scalebox{1}{$7\times 10^{-46}$}&$0.02(0.001)$&$2\times10^{-147}$&$0.02(0.001)$&$8\times10^{-56}$\\
&\scalebox{1}{15}&\scalebox{1}{0.06(0.003)}& \scalebox{1}{$2\times 10^{-33}$}&$0.02(0.003)$&$10^{-76}$&$0.01(0.008)$&$3\times10^{-34}$\\\noalign{\smallskip}\hline\noalign{\smallskip}

&\scalebox{1}{1}&\scalebox{1}{1.42(0.328)}& \scalebox{1}{$3\times 10^{-8}$}&$1.63(0.311)$&$10^{-5}$&$1.81(0.337)$&$9\times10^{-24}$\\
\scalebox{1}{$ t_{3}(\mathbf{0}_{m},I_{m})$\textsuperscript{$\ddagger$}}&\scalebox{1}{5}&\scalebox{1}{0.88(0.105)}&\scalebox{1}{$3\times 10^{-5}$}&$0.90(0.124)$&$10^{3}$&$0.82(0.111)$&$10^{14}$\\
&\scalebox{1}{10}&\scalebox{1}{0.81(0.097)}&\scalebox{1}{$10^{-3}$}&$0.70(0.090)$&$3\times10^{8}$&$0.66(0.053)$&$9\times10^{23}$\\
&\scalebox{1}{15}&\scalebox{1}{0.73(0.085)}&\scalebox{1}{$5\times 10^{-3}$}&$0.50(0.021)$&$10^{12}$&$0.43(0.006)$&$2\times10^{27}$\\\noalign{\smallskip}\hline\noalign{\smallskip}

&\scalebox{1}{1}&\scalebox{1}{1.68(0.281)}&\scalebox{1}{$10^{-53}$}&$2.13(0.312)$&$0$&$2.18$&$0$\\
\scalebox{1}{$NMIX$\textsuperscript{$\ddagger$}} &\scalebox{1}{5}&\scalebox{1}{0.12(0.006)}& \scalebox{1}{$9\times 10^{-48}$}&$0.54(0.121)$&$0$&$0.67(0.152)$&$0$\\
&\scalebox{1}{10}&\scalebox{1}{0.06(0.003)}& \scalebox{1}{$10^{-50}$}&$0.23(0.026)$&$0$&$0.34(0.006)$&$0$ \\
&\scalebox{1}{15}&\scalebox{1}{0(0)}& \scalebox{1}{$3\times 10^{-41}$}&$0.09(0.010)$&$0$&$0.02(0.007)$&$0$\\\noalign{\smallskip}\hline\noalign{\smallskip}

\end{tabular}}
\centering
\begin{tablenotes}
      \small
      \item \fontsize{8.5}{8}\selectfont{\textsuperscript{$\dagger$}Required $\mathsf{R}$ package to generate sampled data: PearsonDS.}
      \item \fontsize{8.5}{8}\selectfont{\textsuperscript{$\ddagger$}Required $\mathsf{R}$ package to generate sampled data: distrEllipse.}
      \item \fontsize{8.5}{8}\selectfont{\textsuperscript{$\ast$}Required $\mathsf{R}$ package to generate sampled data: compositions.}
    \end{tablenotes}
\end{table}

 %------ Figure
\begin{figure}[H]
 \centering
    \subfloat[$N_{2}(\mathbf{0}_{2},A_2)$]{{\includegraphics[width=4cm]{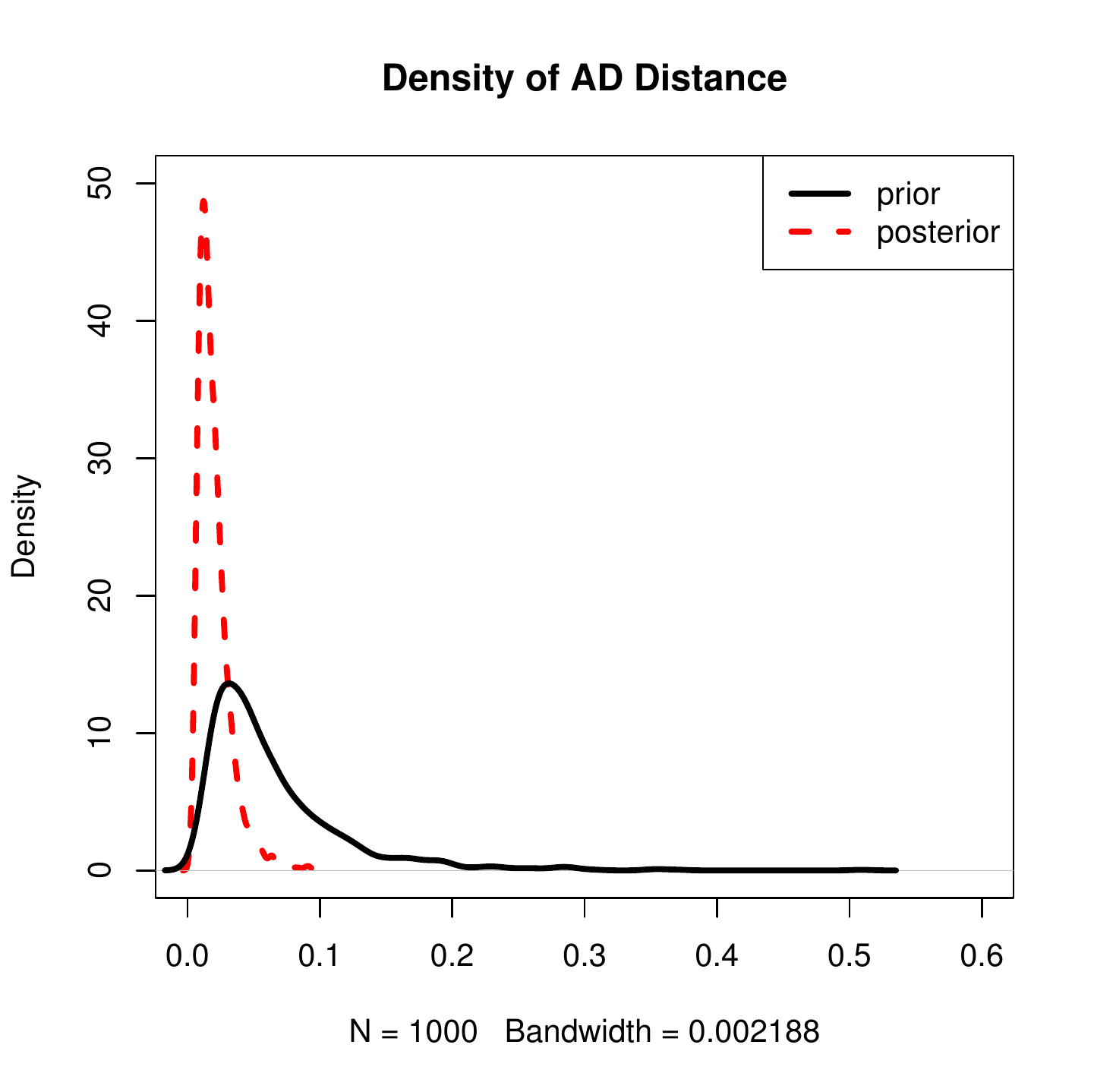} }
    \hspace{.1cm}
    {\includegraphics[width=4cm]{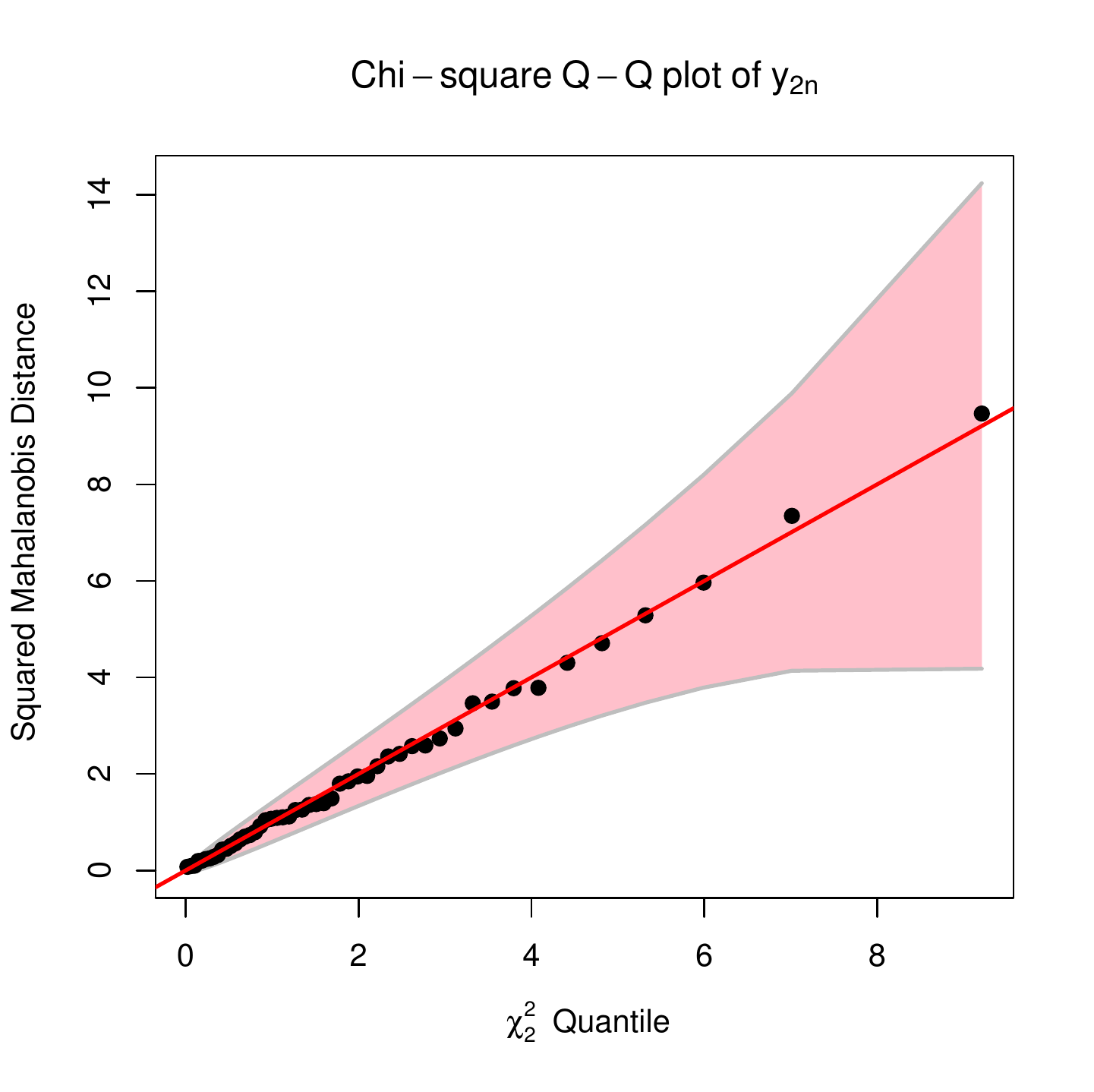} }}%
    \\
    \subfloat[$(P_{VII}(1,1,1))^{2}$]{{\includegraphics[width=4cm]{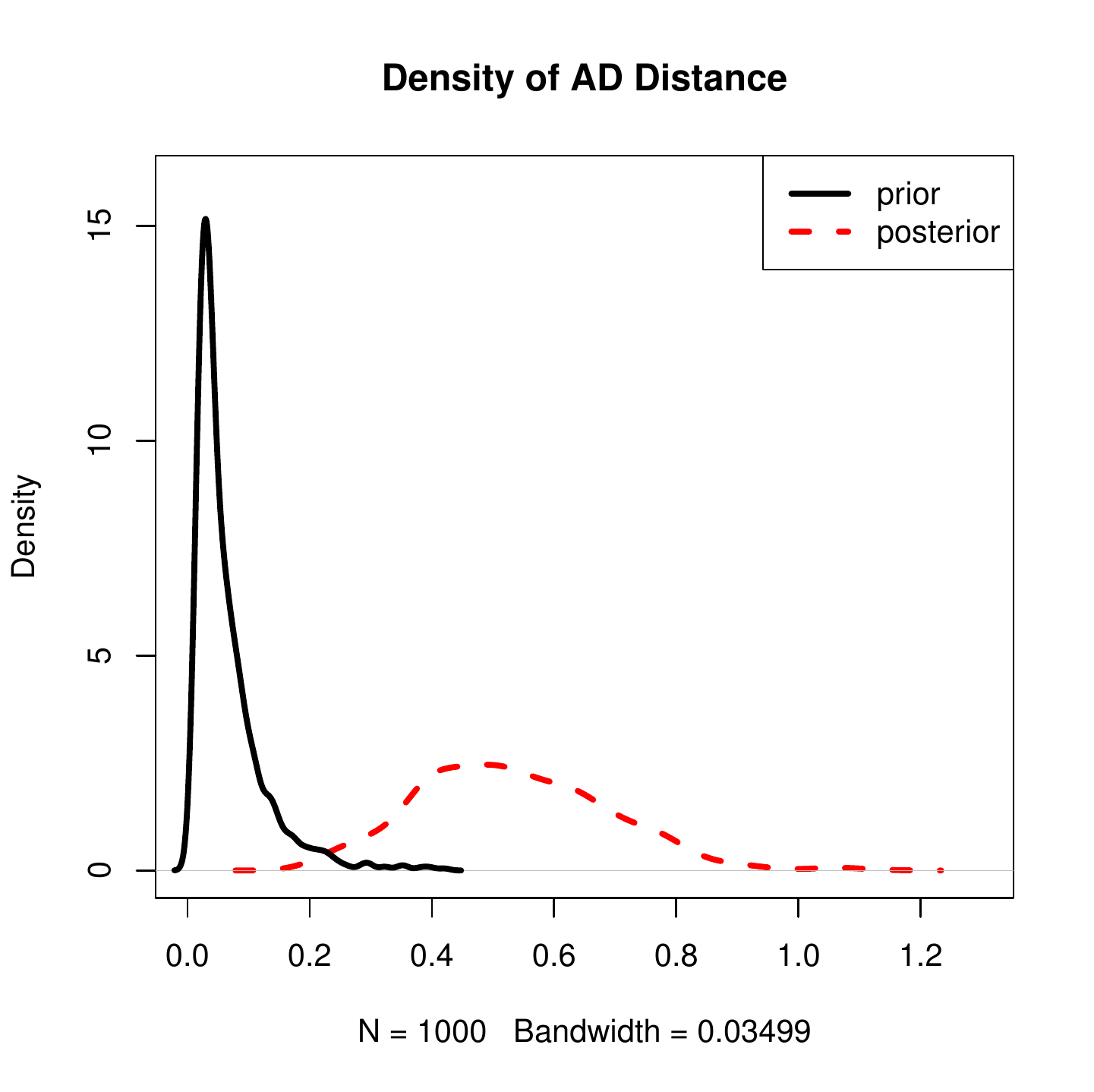} }%
    \hspace{.1cm}
    {\includegraphics[width=4cm]{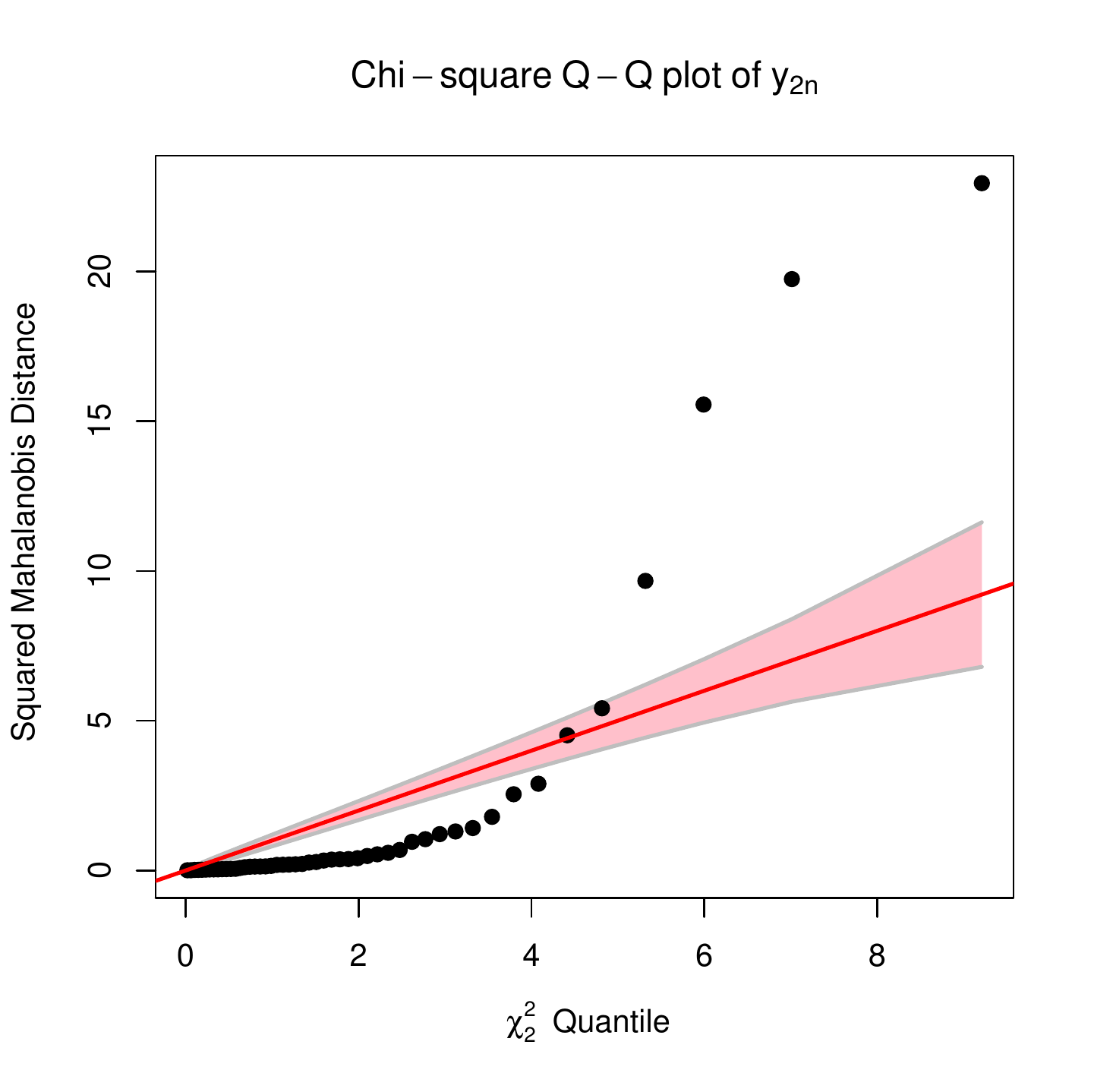} }}
    \\
    \subfloat[$t_{3}(\mathbf{0}_{2},I_{2})$]{{\includegraphics[width=4cm]{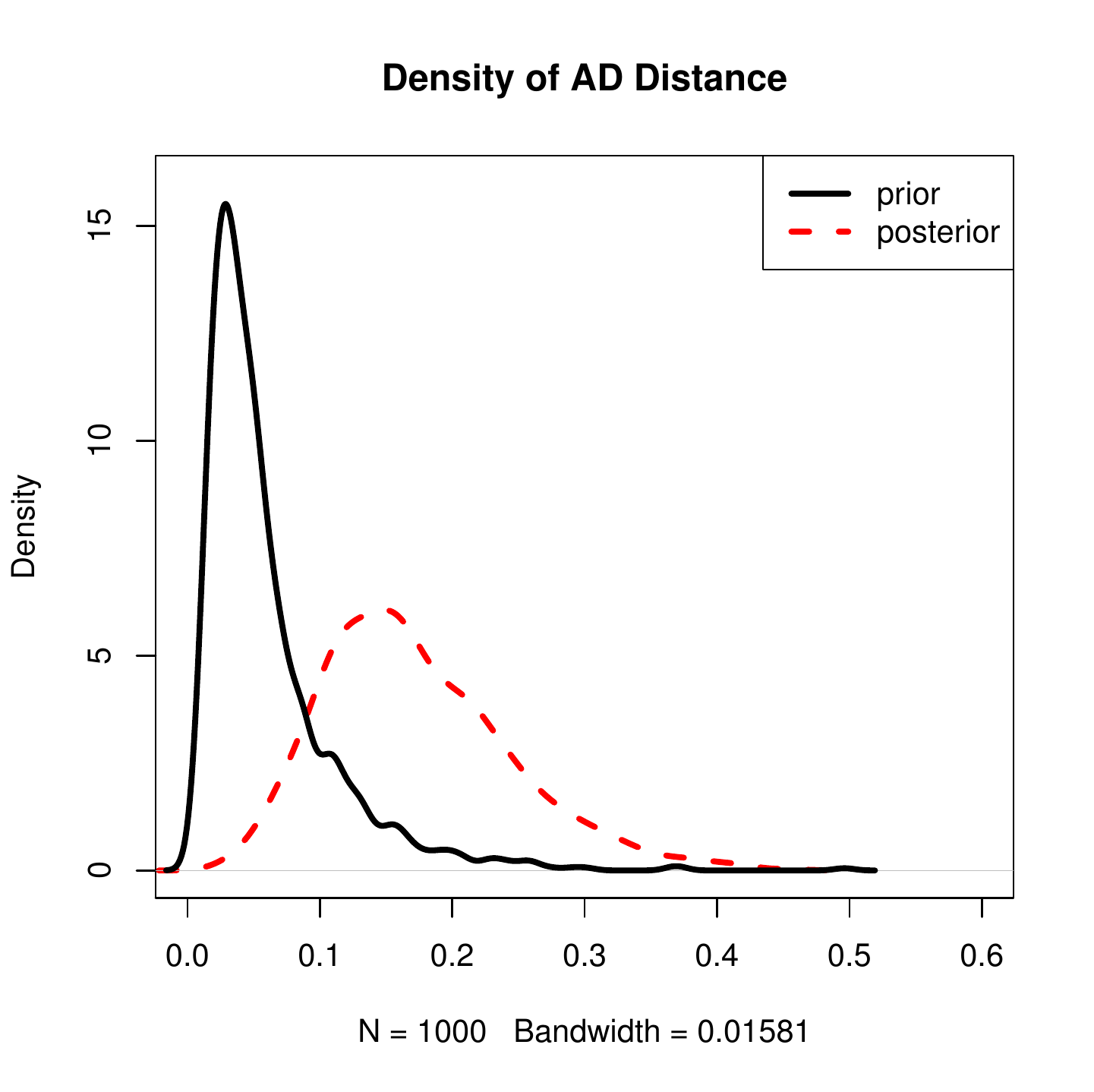} }
    \hspace{.1cm}
   {\includegraphics[width=4cm]{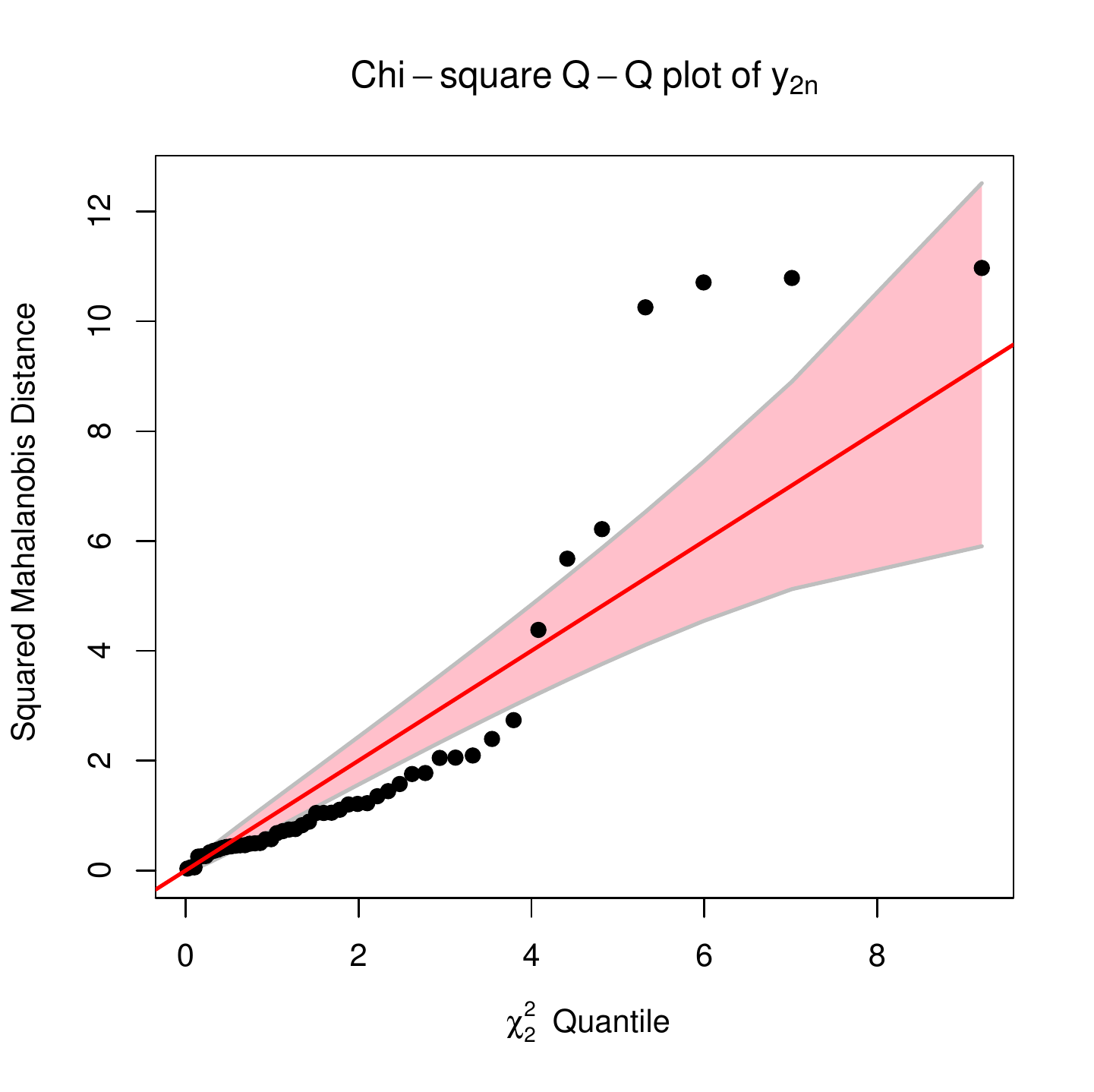} }}
    \\
    \subfloat[$N(0,1)\otimes t_{1}$]{{\includegraphics[width=4cm]{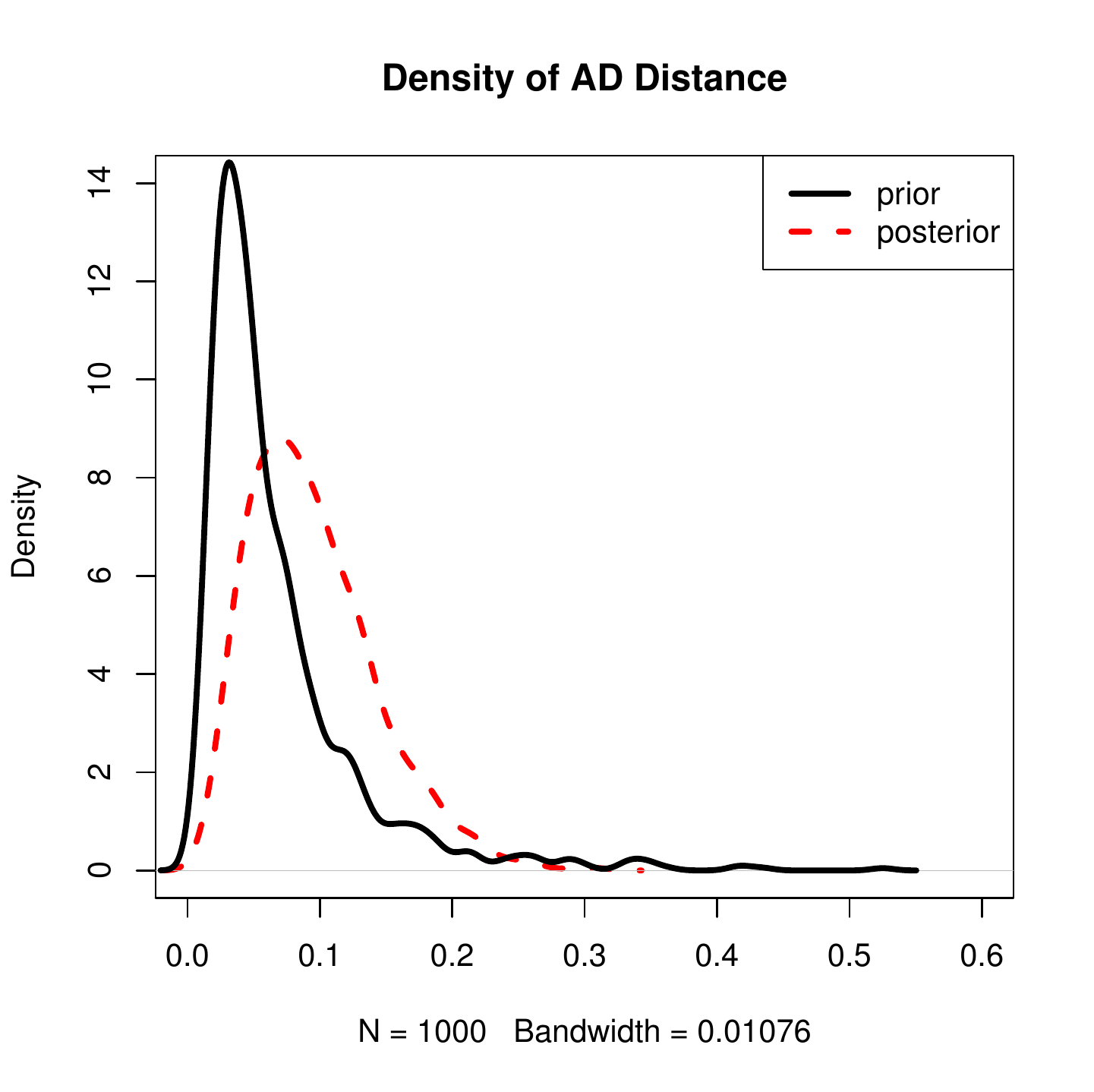} }
    \hspace{.1cm}
   {\includegraphics[width=4cm]{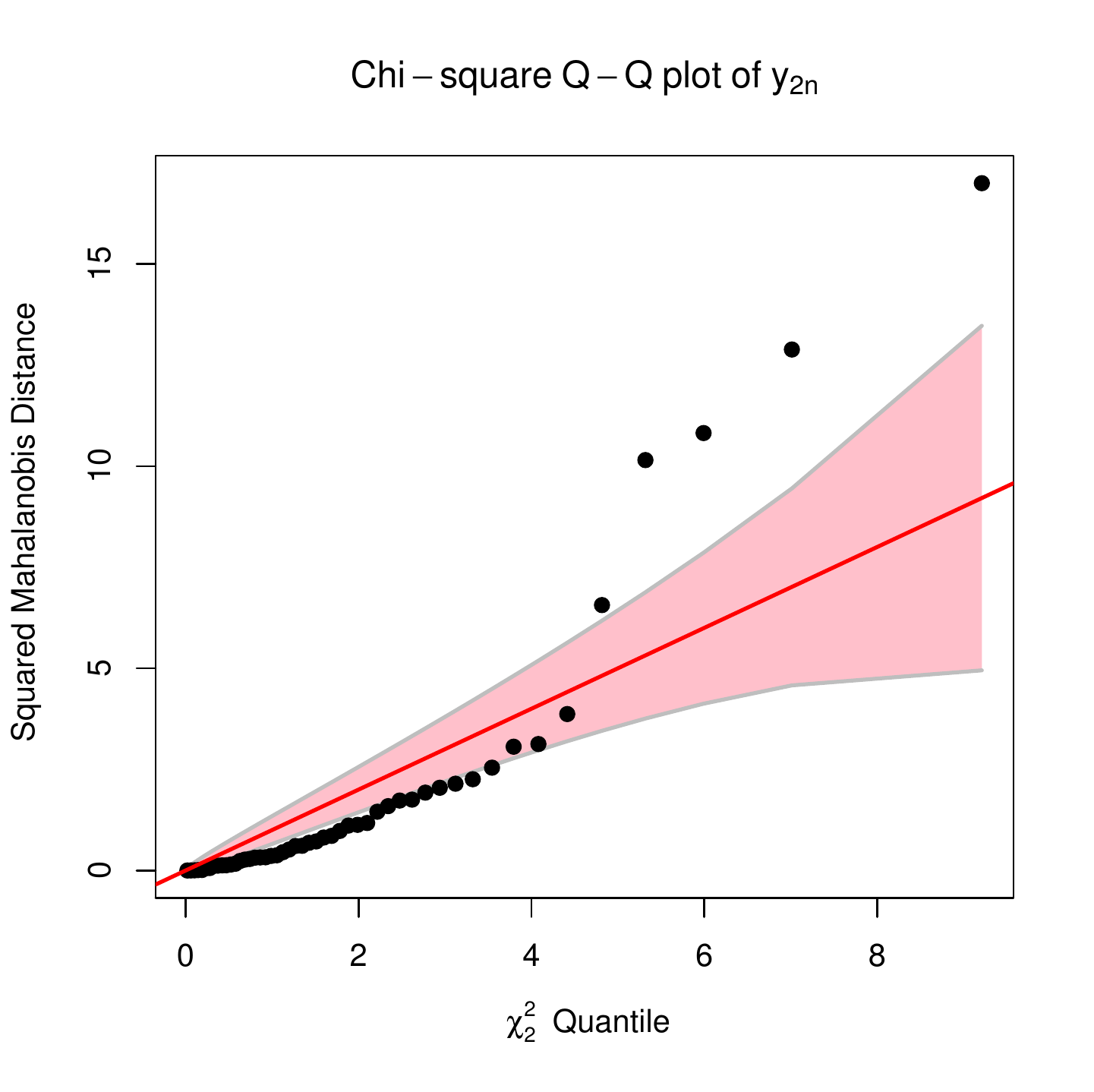} }}
   % \subfloat{{\includegraphics[width=5.2cm]{MI-variate-k-t3-d4-post} }}
    \caption{Left: The solid line represents the prior density of the Anderson-Darling distance and the dashed line represents the posterior density of the Anderson-Darling distance for $a=15$. Right: Chi-square Q-Q plot for the simulated sample with $95\%$ envelope. }\label{density-plot}%
\end{figure}
%------
We end this subsection by investigating  the effect of the choice of $H$ in the proposed test. We consider the results of the proposed test for a sample of size $50$ from true distributions $N_{2}(\mathbf{0}_{2},A_2)$, $t_{3}(\mathbf{0}_{2},I_{2})$ and $N(0,1)\otimes t_{1}$ for different choices of $H$ in Table \ref{prior-data}. Clearly, when $\mathcal{H}_0$ is not true, the results are correct for only $H=\chi^2_{2}$; otherwise, they are incorrect.
\begin{table}[h!]
\centering
\setlength{\tabcolsep}{4 mm}
\caption{$RB$(Strength) of a sample of size 50 when there is prior-data conflict (a tiny overlap between the effective support regions).}\label{prior-data}
\begin{tabular}{lcccc}
\toprule\noalign{\smallskip}
%\diagbox{$H$}{$a$} & 0.5 & 2 & 5 \\
True distribution&$H$ &$a=5$ &$a=10$&$a=15$  \\
\noalign{\smallskip}\hline\noalign{\smallskip}
$N_{2}(\mathbf{0}_{2},A_2)$&\scalebox{1}{$\chi^{2}_{2}$} &\scalebox{1}{$14.52(1)$} &\scalebox{1}{$9.38(1)$} &$7.36(1)$ \\
&\scalebox{1}{$C(0,1)$} &\scalebox{1}{$20(1)$} &\scalebox{1}{$20(1)$}&$20(1)$ \\
&\scalebox{1}{$N(0,1)$} &\scalebox{1}{$20(1)$} &\scalebox{1}{$20(1)$}&$20(1)$ \\
\noalign{\smallskip}\hline\noalign{\smallskip}
$t_{3}(\mathbf{0}_{2},I_2)$&$\chi^{2}_{2}$&$0.88(0.105)$&$0.81(0.097)$&$0.73(0.087)$\\
&$C(0,1)$&$20(1)$&$20(1)$&$19.38(1)$\\
&$N(0,1)$&$20(1)$&$20(1)$&$20(1)$\\
\noalign{\smallskip}\hline\noalign{\smallskip}
$N(0,1)\otimes t_{1}$&$\chi^{2}_{2}$&$0.76(0.054)$&$0.70(0.067)$&$0.68(0.058)$\\
&$C(0,1)$&$20(1)$&$20(1)$&$20(1)$\\
&$N(0,1)$&$20(1)$&$20(1)$&$20(1)$\\
\bottomrule
\end{tabular}
\end{table}

\subsection{Application}
In this subsection, we look at the performance of the methodology by using a real data set. For comparison purposes, the results of the Doornik-Hanson (DH) test are also presented \cite{Doornik}. The $\mathsf{R}$ package \textbf{asbio} is used to compute p-values of the DH test.

We consider the data set of the National track records for women which reprints women's athletic records for 55 countries with seven variables (Atkinson et al. \cite{Atkinson}). The variables are 100, 200, 400 meters in seconds, 800, 1500, 3000 meters in minutes and the marathon. The problem is to assess the seven-variate normality assumption for this data set. Atkinson et al. \cite{Atkinson} provided the Q-Q plot (see the right side of Figure \ref{QQ plot}) of squared ordered Mahalanobis distances against the square root of the percentage points of the chi-square distribution with seven degrees of freedom and then suggested the non-normality of this data set. The DH's p-value for this data is $4.042382\times 10^{-68}$, which shows strong evidence to reject the multivariate normality assumption. The results of the proposed test are given in Table \ref{T exm6}, which show a strong evidence to reject the assumption of multivariate normality .
%Figure \ref{fig3}, part (b), shows the good performance of the methodology in this data set.

%%%%%%% Table of Example 6 %%%%%%%%
\begin{table}[ht]
\centering
\setlength{\tabcolsep}{.7 mm}
\caption{Relative belief ratios and strength (str)-s for testing the seven-variate normality assumption
of national track records for women with various choices of $a$.}\label{T exm6}
\begin{small}
\begin{tabular}{ccccccc}
\toprule
& \multicolumn{6}{c}{$a$} \\ \cmidrule{2-7} \bfseries{Real data}& 1& 5&6& 8 &10&15\\ \hline
$RB$(str) &7.480(0.626) &1.140(0.405) &0.700(0.133) & 0.480(0.063)& 0.240(0.012) &0.120(0.006) \\
\bottomrule
\end{tabular}
\end{small}
\end{table}
\begin{figure}[H]
\centering
    \subfloat{{\includegraphics[width=5cm]{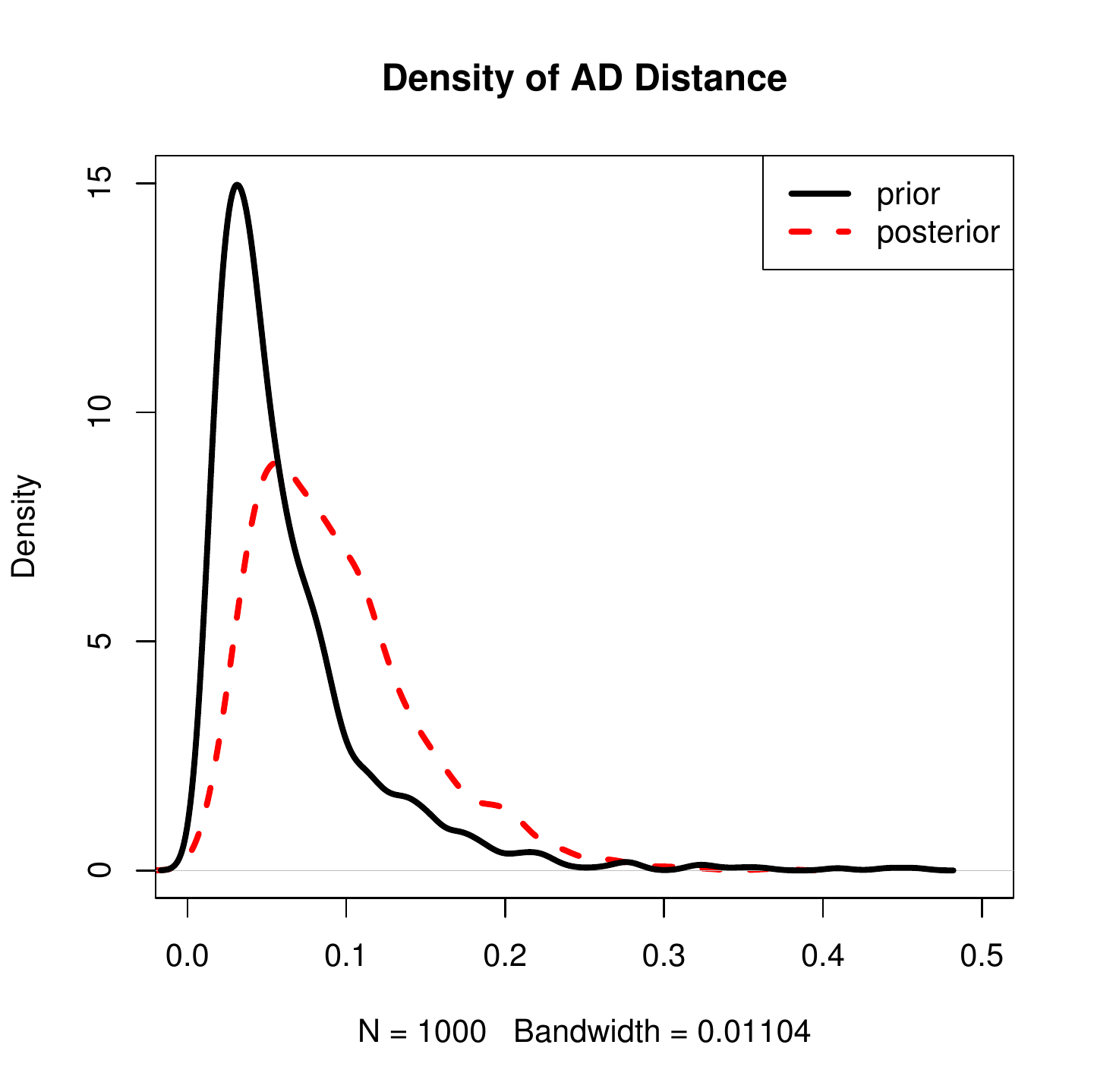} }
    \hspace{.1cm}
    {\includegraphics[width=5cm,height=5.2cm]{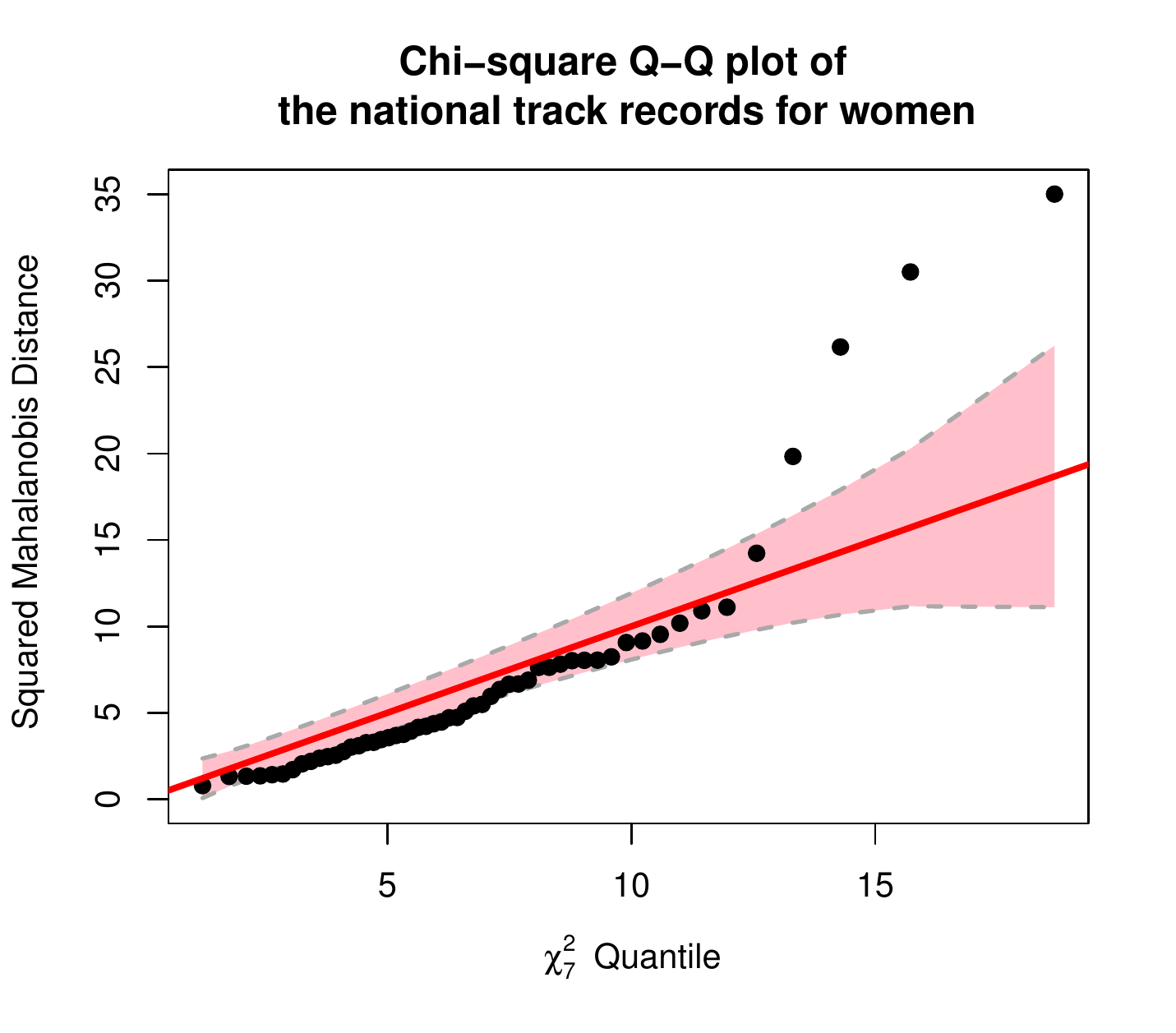} }}%
    \caption{Left: The solid line represents the prior density of the Anderson-Darling distance and the dashed line represents the posterior density of the Anderson-Darling distance for $a=15$. Right: Chi-square Q-Q plot of the real data set with $95\%$ envelope.}%
    \label{QQ plot}%
\end{figure}
%%%%%%% Figure of example 5 %%
%\begin{figure}[ht]
%    \centering
%    {\includegraphics[width=5.5cm]{img/f4.pdf} }%
%    \caption{Plots of the prior and posterior density of the Anderson-Darling distance for Examples 5 with $a=15$.}%
%    \label{fig3}%
%\end{figure}
%%%%%%%%%%%%%%%%%%%%%%%%

\section{Conclusion}
A novel Bayesian nonparametric test for assessing multivariate normality has been proposed. The suggested test is developed by using Mahalanobis distance as a good technique to convert the $m$-variate problem into the univariate problem. The Dirichlet process has been considered as a prior on the distribution of the Mahalanobis distance. Then, the concentration of the distribution of the distance between posterior process and chi-square distribution with $m$ degrees of freedom is compared to the concentration of the distribution of the distance between prior process and chi-square distribution with $m$ degrees of freedom via relative belief ratio.  The distance between the Dirichlet process and the chi-square distribution is developed based on the Anderson-Darling distance. Several theoretical results including consistency have been discussed for the proposed test. The test is illustrated through various simulated examples and a real data set.

\begin{appendices}
\section{Proof of Lemma \ref{discrete AD}}
Consider $P_{N}(x)$ as
\begin{small}
$$
P_{N}(x)=
\begin{cases}
0& x<Y_{(1)}\\
P_{N}(Y_{(i)})& Y_{(i)}\leq x <Y_{(i+1)},\, (i=1,\ldots ,N-1).\\
1& x\geq Y_{(N)}
\end{cases}
$$
\end{small}
Let $g(x)=\frac{dG(x)}{dx}$, then

\begin{small}
\begin{align*}
d_{AD}(P_{N},G)&=\int_{-\infty}^{\infty}\frac{\left(P_{N}(x)-G(x)\right)^{2}}{G(x)\left(1-G(x)\right)}g(x)\, dx\\
&=\int_{-\infty}^{Y_{(1)}}\frac{G(x)^{2}}{G(x)\left(1-G(x)\right)}g(x)\, dx
+\int_{Y_{(N)}}^{\infty}\frac{\left( 1-G(x)\right)^{2}}{G(x)\left(1-G(x)\right)}g(x)\, dx\\
&+\sum_{i=1}^{N-1}\int_{Y_{(i)}}^{Y_{(i+1)}}\frac{\left(P_{N}(Y_{(i)})-G(x)\right)^{2}}{G(x)\left(1-G(x)\right)}g(x)\, dx.\\
\end{align*}
\end{small}
Substituting $y=G(x)$, $G(Y_{(i)})=U_{(i)}$, $G(-\infty)=0$ and $G(\infty)=1$, gives

\begin{small}
\begin{align*}
d_{AD}(P_{N},G)&=\sum_{i=1}^{N-1}\int_{U_{(i)}}^{U_{(i+1)}}\frac{\left(P_{N}(Y_{(i)})-y\right)^{2}}{y\left(1-y\right)}\, dy+\int_{0}^{U_{(1)}}\frac{y}{1-y}\, dy+\int_{U_{(N)}}^{1}\frac{1-y}{y}\, dy\\
&=\sum_{i=1}^{N-1} \bigg[P_{N}^{2}(Y_{(i)})\log(y)-\Big[\left(P_{N}(Y_{(i)})-1\right)^{2}\log(1-y)\Big]-y\bigg]_{U_{(i)}}^{U_{(i+1)}}\\
&+\bigg[-y-\log(1-y)\bigg]_{0}^{U_{(1)}}+\bigg[\log(y)-y\bigg]_{U_{(N)}}^{1}\\
&=I_{1}+I_{2}+I_{3}.
\end{align*}
\end{small}
Note that,

\begin{small}
\begin{align*}
I_{1}&=-\sum_{i=1}^{N-1}\left(U_{(i+1)}-U_{(i)}\right)-\sum_{i=1}^{N-1}\left(P_{N}(Y_{(i)})-1\right)^{2}
\left(\log(1-U_{(i+1)})-\log(1-U_{(i)})\right)\nonumber\\
&+\sum_{i=1}^{N-1}P_{N}^{2}(Y_{(i)})\left(\log(U_{(i+1)})-\log(U_{(i)})\right)\nonumber\\
&=\sum_{i=1}^{N-1}P_{N}^{2}(Y_{(i)})\log\frac{U_{(i+1)}\left(1-U_{(i)}\right)}{U_{(i)}\left(1-U_{(i+1)}\right)}+\sum_{i=1}^{N-1}\left(2P_{N}(Y_{(i)})-1\right)\log\frac{1-U_{(i+1)}}{1-U_{(i)}}
\nonumber\\
&-\left(U_{(N)}-U_{(1)}\right).
\end{align*}
\end{small}
\noindent Also, $I_{2}=-U_{(1)}-\log\left(1-U_{(1)}\right)$ and $I_{3}=-1-\log U_{(N)}+U_{(N)}.$ Therefore,
adding $I_{1}$, $I_{2}$ and $I_{3}$, gives

\begin{small}
\begin{align}\label{app of AD-2}
d_{AD}(P_{N},G)&=\sum_{i=1}^{N-1}P_{N}^{2}(Y_{(i)})\log\frac{U_{(i+1)}\left(1-U_{(i)}\right)}{U_{(i)}\left(1-U_{(i+1)}\right)}+\sum_{i=1}^{N-1}\left(2P_{N}(Y_{(i)})-1\right)\log\frac{1-U_{(i+1)}}{1-U_{(i)}}
\nonumber\nonumber\\
&-1-\log\left(U_{(N)}(1-U_{(1)})\right).
\end{align}
\end{small}
The proof is completed by substituting $P_{N}(Y_{(i)})=\sum_{j=1}^{i}J^{\prime}_{j}$, $P^{2}_{N}(Y_{(i)})=
\sum_{j=1}^{i}J^{\prime^{2}}_{j}+2\sum_{j=1}^{i-1}\sum_{k=j+1}^{i}J^{\prime}_{k}J^{\prime}_{j}$
and $U_{(i)}=G(Y_{(i)})$ in terms on the right-hand side of \eqref{app of AD-2}.

\section{Proof of Lemma \ref{Exp-Var of AD}}
To prove (i), note that, from the property of the Dirichlet process, for any $t\in \mathbb{R}$, $E_{P}( P(t)-$ $H(t))^{2}=\frac{H(t)( 1-H(t))}{a+1}$. Then
\begin{small}
\begin{equation*}
E_{P}\left( d_{AD}(P, H)\right)=\int_{-\infty}^{\infty}\frac{E_{P}\left( P(t)-H(t)\right)^{2}}{H(t)\left( 1-H(t)\right)}\, dH(t)=\frac{1}{a+1}.
\end{equation*}
\end{small}
To prove (ii), it is enough to compute $E_{P}\left( d_{AD}(P, H)\right)^{2}$. According to the
Corollary \ref{corollary}, we consider $H$
to be the cdf of the Uniform distribution on [0, 1]. Then
\begin{small}
\begin{align*}
E_{P}\left( d_{AD}(P, H)\right)^{2}&=E_{P}\left(\int_{-\infty}^{\infty}\frac{\left( P(t)-H(t)\right)^{2}}{H(t)\left( 1-H(t)\right)}\, dH(t)\right)^{2}\\
&=E_{P}\left(\int_{0}^{1}\frac{\left( P(t)-t\right)^{2}}{t(1-t)}\, dt
\int_{0}^{1}\frac{\left( P(s)-s\right)^{2}}{s(1-s)}\, ds\right)\\
&=E_{P}\Bigg(\int_{0}^{1}\int_{0}^{t}\frac{\left( P(t)-t\right)^{2}\left( P(s)-s\right)^{2}}{t(1-t)s(1-s)}\, ds\,dt\\
&+\int_{0}^{1}\int_{0}^{s}\frac{\left( P(t)-t\right)^{2}\left( P(s)-s\right)^{2}}{t(1-t)s(1-s)}\, dt\,ds\Bigg)\\
&=2E_{P}\Bigg(\int_{0}^{1}\int_{0}^{t}\frac{\left( P(t)-t\right)^{2}\left( P(s)-s\right)^{2}}{t(1-t)s(1-s)}\, ds\,dt\Bigg)\\
&=2\int_{0}^{1}\int_{0}^{t}\frac{E_{P}\big\lbrace\left( P(s)+P\left((s,t]\right)-t\right)^{2}\left( P(s)-s\right)^{2}\big\rbrace}{t(1-t)s(1-s)}\, ds\,dt.\\
\end{align*}
\end{small}
Note that, from the property of the Dirichlet process, for any $s<t$ and $i,j\in\mathbb{N}$,
$E_{P}(P^{i}(s)P^{j}((s,t])) =\frac{\Gamma(a)}{\Gamma(a+i+j)}\frac{\Gamma(as+i)}{\Gamma(as)}
\frac{\Gamma\left(a(t-s)+j\right)}{\Gamma(a(t-s))}$ and $E_{P}\left(P^{i}(s)\right)=\prod_{k=0}^{i-1}\frac{as+k}{a+k}$. Then

\begin{small}
\begin{align*}
E_{P}\left( d_{AD}(P, H)\right)^{2}&=\int_{0}^{1}\int_{0}^{t}\frac{1}{ts(1-t)(1-s)}\bigg\lbrace
\frac{2as(a(t-s)+1)(as+1)(t-s)}{(a+3)(a+2)(a+1)}\\
&+\frac{4as(as+2)(as+1)(t-s)}{(a+3)(a+2)(a+1)}+\frac{2s(as+3)(as+2)(as+1)}{(a+3)(a+2)(a+1)}\\
&-\frac{4as(2s+t)(as+1)(t-s)}{(a+2)(a+1)}-\frac{4as^{2}(a(t-s)+1)(t-s)}{(a+2)(a+1)}\\
&-\frac{4s(as+2)(as+1)(s+t)}{(a+2)(a+1)}+\frac{2s(s^{2}+4st+t^{2})(as+1)}{a+1}\\
&+\dfrac{4as^{2}(s+2t)(t-s)}{a+1}+\frac{2s^{2}(t-s)(a(t-s)+1)}{a+1}-4s^{2}t(t-s)\\
&-4s^{2}t(s+t)+2s^{2}t^{2}\bigg\rbrace\,ds\,dt.
\end{align*}
\end{small}
After simplification, we get

\begin{small}
\begin{align*}
E_{P}\left( d_{AD}(P, H)\right)^{2}&=\int_{0}^{1}\int_{0}^{t}\frac{2\left((a-6)\left((3t-2)s-t\right)-6\right)}{t(s-1)(a+3)(a+2)(a+1)}\, ds\, dt\\
&=\int_{0}^{1}\frac{2}{t(a+3)(a+2)(a+1)}\Big\lbrace(a-6)(3t-2)t-2i\pi\left((a-6)t-a+3\right)\\
&+2\left(a(t-1)-6t+3\right)\log(t-1)\Big\rbrace\,dt\\
&=\frac{a(2\pi^{2}-15)-6(\pi^{2}-15)}{3(a+3)(a+2)(a+1)},
\end{align*}
\end{small}
for Re$(t)<1$ or $t\not\in\mathbb{R}$, where Re$(t)$ denotes the real part of $t$ and $i$ is the
imaginary unit. Then, the variance of $d_{AD}(P, H)$ is given by

\begin{small}
\begin{align*}
Var_{P}(d_{AD}(P, H))&=E_{P}\left( d_{AD}(P, H)\right)^{2}-E_{P}^{2}\left( d_{AD}(P, H)\right)\\
&=\dfrac{2\left( (\pi^{2}-9)a^{2}+(30-2\pi^{2})a-
3\pi^{2}+36\right)}{3(a+1)^{2}(a+2)(a+3)}.
\end{align*}
\end{small}
Hence, the proof is completed.

\section{Proof of Lemma \ref{Exp of posterior}}
Assume that $d=(d_{M}^{2}(\mathbf{y}_{1}),\ldots,
d_{M}^{2}(\mathbf{y}_{n}))$ is the observed square Mahalanobis distance
from $P$ where $P\sim DP(a,F_{(m)})$. Note that
\begin{small}
\begin{align*}
d_{AD}(P_{d},F_{(m)})&=\int_{-\infty}^{\infty}\frac{\left( P_{d}(t)-F_{(m)}(t)\right)^{2}}{F_{(m)}(t)\left( 1-F_{(m)}(t)\right)}\, dF_{(m)}(t)\nonumber\\
%&=\int_{-\infty}^{\infty}\frac{E_{P_{d}}\left( P_{d}(z)-H_{d}(z)+H_{d}(z)-F_{(m)}(z)\right)^{2}}{F_{(m)}(z)\left( 1-F_{(m)}(z)\right)}\, dF_{(m)}(z)\nonumber\\
&\leq\left(\displaystyle{\sup_{t\in\mathbb{R}}}| P_{d}(t)-F_{(m)}(t)|\right)^{2}\int_{-\infty}^{\infty}\frac{1}{F_{(m)}(t)\left( 1-F_{(m)}(t)\right)} dF_{(m)}(t)\nonumber\\
&\leq\displaystyle{\sup_{t\in\mathbb{R}}}| P_{d}(t)-F_{(m)}(t)|\int_{-\infty}^{\infty}\frac{1}{F_{(m)}(t)\left( 1-F_{(m)}(t)\right)}dF_{(m)}(t)\\
&\leq\int_{-\infty}^{\infty}\frac{1}{F_{(m)}(t)\left( 1-F_{(m)}(t)\right)}dF_{(m)}(t)\bigg\lbrace\displaystyle{\sup_{t\in\mathbb{R}}}| P_{d}(t)-H_{d}(t)|+\\
&\displaystyle{\sup_{t\in\mathbb{R}}}| H_{d}(t)-F_{(m)}(t)|\bigg\rbrace,
\end{align*}
\end{small}
where the third inequality holds since $0\leq|P_{d}(t)-F_{(m)}(t)|\leq 1$ and the fourth inequality holds by triangle inequality. To prove (i), as $a\rightarrow\infty$, from James \cite{James}, $\displaystyle{\sup_{t\in\mathbb{R}}}| P_{d}(t)-H_{d}(t)|\xrightarrow{a.s.}0$ and by the continuous mapping theorem $\displaystyle{\sup_{t\in\mathbb{R}}}| H_{d}(t)-F_{(m)}(t)|\xrightarrow{a.s.}0$. To prove (ii), since $\displaystyle{\sup_{t\in\mathbb{R}}}| P_{d}(t)-H_{d}(t)|\xrightarrow{a.s.}0$ as $n\rightarrow\infty$ and $\mathcal{H}_{0}$ is true, the continuous mapping theorem and Polya's theorem \cite{Dasgupta} imply $\displaystyle{\sup_{t\in\mathbb{R}}}| H_{d}(t)-F_{(m)}(t)|\xrightarrow{a.s.}0$. Note that, the final results of part (i) and (ii) are concluded by practical assumptions in probability and measure theory given in Section 3.1 of Capi\'{n}ski and Kopp \cite{Capinski}.
\\
To prove (iii), note that, $\left(F_{(m)}(t)(1-F_{(m)}(t))\right)^{-1}\geq 4$, then
\begin{align*}
d_{AD}(P_{d},F_{(m)})&\geq 4\int_{-\infty}^{\infty}\left( P_{d}(t)-F_{(m)}(t)\right)^{2}\, dF_{(m)}(t)\\
&\geq 4\,d_{CvM}(P_{d},F_{(m)}).
\end{align*}
From Choi and Bulgren \cite{Choi}, since
\begin{small}
$
d_{CvM}(P_{d},F_{(m)})\geq \dfrac{1}{3}\left(\displaystyle{\sup_{t\in\mathbb{R}}}| P_{d}(t)-F_{(m)}(t)|\right)^{3}
$\end{small},
\begin{align*}
d_{AD}(P_{d},F_{(m)})&\geq \frac{4}{3}\left(\displaystyle{\sup_{t\in\mathbb{R}}}| P_{d}(t)-F_{(m)}(t)|\right)^{3}.
\end{align*}
Using the triangle inequality gives
\begin{align*}
\displaystyle{\sup_{t\in\mathbb{R}}}| P_{d}(t)-F_{(m)}(t)|\geq\displaystyle{\sup_{t\in\mathbb{R}}}| H_{d}(t)-F_{(m)}(t)|-\displaystyle{\sup_{t\in\mathbb{R}}}| P_{d}(t)-H_{(d)}(t)|.
\end{align*}
Similar to the proof of part (ii), as $n\rightarrow\infty$, $\displaystyle{\sup_{t\in\mathbb{R}}}| P_{d}(t)-H_{(d)}(t)|\xrightarrow{a.s.}0$ and
$\displaystyle{\sup_{t\in\mathbb{R}}}| H_{d}(t)-F_{(m)}(t)|\xrightarrow{a.s.}\displaystyle{\sup_{t\in\mathbb{R}}}| P_{true}(t)-F_{(m)}(t)|$, where $P_{true}$ is the true distribution of the sample $d$. Since $\mathcal{H}_{0}$ is not true, $\liminf\displaystyle{\sup_{t\in\mathbb{R}}}| P_{d}(t)-F_{(m)}(t)|\displaystyle{\overset{a.s.}{>}}0$, which implies $\liminf d_{AD}(P_{d},$ $F_{(m)})\displaystyle{\overset{a.s.}{>}}0$.
\\
To prove (iv), since for any \begin{small}$t\in \mathbb{R}$, $E_{P_{d}}\left(P_{d}(t)\right)=H_{d}(t)$\end{small} and \begin{small}$E_{P_{d}}\left( P_{d}(t)-H_{d}(t)\right)^{2}=\frac{H_{d}(t)\left( 1-H_{d}(t)\right)}{a+n+1}$\end{small}, then, as $n\rightarrow\infty$
\begin{small}
\begin{align*}\label{Exp of pos d_AD}
\liminf E_{P_{d}}\left( d_{AD}(P_{d},F_{(m)})\right)&\geq\liminf\int_{-\infty}^{\infty}\frac{H_{d}(t)\left( 1-H_{d}(t)\right)}{(a+n+1)F_{(m)}(t)\left( 1-F_{(m)}(t)\right)} dF_{(m)}(t)\nonumber\\
&+\liminf\int_{-\infty}^{\infty}\frac{\left(H_{d}(t)-F_{(m)}(t)\right)^{2}}{F_{(m)}(t)\left( 1-F_{(m)}(t)\right)}dF_{(m)}(t).
\end{align*}
\end{small}
Applying Fatou's lemma gives
\begin{small}
\begin{align*}
\liminf E_{P_{d}}\left( d_{AD}(P_{d},F_{(m)})\right)&\geq\int_{-\infty}^{\infty}\liminf\left(\frac{H_{d}(t)\left( 1-H_{d}(t)\right)}{(a+n+1)F_{(m)}(t)\left( 1-F_{(m)}(t)\right)}\right)dF_{(m)}(t)\nonumber\\
&+\int_{-\infty}^{\infty}\liminf\left(\frac{\left(H_{d}(t)-F_{(m)}(t)\right)^{2}}{F_{(m)}(t)\left( 1-F_{(m)}(t)\right)}\right)dF_{(m)}(t)\\
&=0+\int_{-\infty}^{\infty}\inf\left(\frac{\left(P_{true}(t)-F_{(m)}(t)\right)^{2}}{F_{(m)}(t)\left( 1-F_{(m)}(t)\right)}\right)dF_{(m)}(t).
\end{align*}
\end{small}
Since $\mathcal{H}_{0}$ is not true, $\inf\left(\frac{\left(P_{true}(t)-F_{(m)}(t)\right)^{2}}{F_{(m)}(t)\left( 1-F_{(m)}(t)\right)}\right)\displaystyle{\overset{a.s.}{>}}0$. Hence, by Theorem 15.2 of Billingsley \cite{Billingsley}, $\liminf E_{P_{d}}\left( d_{AD}(P_{d},F_{(m)})\right)\displaystyle{\overset{a.s.}{>}}0$.

\section{Chi-square Q-Q plot}

Chi-square Q-Q plot shows the relationship between the distribution of data points and chi-square distribution. It is a common graphical method to assess multivariate normality.  Specifically, for a given $m$-variate sampled data $\mathbf{y}_{m\times n}=\left(
\mathbf{y}_{1},\ldots,\mathbf{y}_{n}\right)$
with the size of $n$, where $\mathbf{y}_{i}\in\mathbb{R}^{m}$ for $i=1.\ldots,n$, chi-square Q-Q plot provide the plot of square ordered Mahalanobis distances against the quantile points of the chi-square distribution with $m$ degrees of freedom. From Johnson and Wichern \cite{Johnson}, if $\mathbf{y}_{m\times n}$ comes from an $m$-variate normal distribution, we should see the points of square ordered Mahalanobis distances are nearly a straight line with unit slope (reference line).

In the main paper,  we showed that the chi-square Q-Q plot confirms the result of the proposed test. For this, we provided the chi-square Q-Q plot with its approximate confidence envelope around the reference line for the data set of examples of Section 7 in the right side of Figure \ref{density-plot} and Figure \ref{QQ plot}. Note that, the confidence envelope is plotted to show the central intervals for each quantile of the chi-square distribution with $m$ degrees of freedom. In fact, the envelope in Figure \ref{density-plot} and \ref{QQ plot} provides a density estimate of the quantiles drawn from the chi-square distribution with  $m$ degrees of freedom. For more detailed studies about the general methods for Q-Q plot see Oldford \cite{Oldford}.% From Figure \ref{QQ plot}, it is clear that the seven-variate normality assumption for the real data set of Example 5 is rejected.

\end{appendices}

%\begin{acknowledgements}
%If you'd like to thank anyone, place your comments here
%and remove the percent signs.
%\end{acknowledgements}

% Authors must disclose all relationships or interests that
% could have direct or potential influence or impart bias on
% the work:
%
% \section*{Conflict of interest}
%
% The authors declare that they have no conflict of interest.

% BibTeX users please use one of
%\bibliographystyle{spbasic}      % basic style, author-year citations
%\bibliographystyle{spmpsci}      % mathematics and physical sciences
%\bibliographystyle{spphys}       % APS-like style for physics
%\bibliography{}   % name your BibTeX data base

% Non-BibTeX users please use

\end{document}